\newcommand\redout{\bgroup\markoverwith
{\textcolor{red}{\rule[.5ex]{2pt}{0.4pt}}}\ULon}
\newcommand\cyr{%
\renewcommand\rmdefault{wncyr}%
\renewcommand\sfdefault{wncyss}%
\renewcommand\encodingdefault{OT2}%
\normalfont
\selectfont}
\DeclareTextFontCommand{\textcyr}{\cyr}
\newtheorem{theorem}{Theorem}[section]
\newtheorem{lemma}[theorem]{Lemma}
\newtheorem{proposition}[theorem]{Proposition}
\newtheorem{corollary}[theorem]{Corollary}
\newtheorem{question}[theorem]{Question}
\theoremstyle{definition}
\newtheorem{definition}[theorem]{Definition}
\newtheorem{remark}[theorem]{Remark}
\newtheorem{definitions}[theorem]{Definitions}
\theoremstyle{remark}
\newtheorem{example}[theorem]{Example}
\newcommand{\Ass}{\operatorname{Ass}}
\newcommand{\rad}{\operatorname{rad}}
\newcommand{\depth}{\operatorname{depth}}
\newcommand{\fm}{\frak{m}}
\newcommand{\fp}{\frak{p}}
\newcommand{\fn}{\frak{n}}
\dedicatory{Dedicated to the memory of Professor Shiro Goto}
\begin{document}

\title[Tight Hilbert function]{Tight Hilbert Polynomial and F-rational local rings}
\author{{Saipriya Dubey}, {Pham Hung Quy}
\and{Jugal Verma}}
\thanks{The first author is supported by the Senior Research Fellowship of HRDG, CSIR, Government of India. The second author is partially supported by a fund of Vietnam National Foundation for Science
and Technology Development (NAFOSTED) under grant number
101.04-2020.10.}
\thanks{Data sharing not applicable to this article as no datasets were generated or analysed during the current study.\\
  The authors have no conflicts of interest to declare. All co-authors have seen and agree with the contents of the manuscript and there is no financial interest to report. We certify that the submission is original work and is not under review at any other publication.}
\address{Saipriya Dubey \newline\indent Department of Mathematics, Indian Institute of Technology Bombay, \newline \indent 
Mumbai, India, Email- {\normalfont{sdubey@math.iitb.ac.in}} \vspace{0.28cm}
\newline\indent  Pham Hung Quy \newline\indent Department of Mathematics, FPT University,
\newline \indent
Hanoi, Vietnam, Email- {\normalfont {quyph@fe.edu.vn }}
\vspace{0.28cm}
\newline\indent  Jugal Verma \newline\indent Department of Mathematics, Indian Institute of Technology Bombay, \newline \indent 
Mumbai, India, Email- {\normalfont {jkv@iitb.ac.in}}}


\keywords{Tight Hilbert polynomial, F-rational rings, parameter test elements, d-sequences, Local cohomology}

\begin{abstract} Let $(R,\fm)$ be a Noetherian local ring of prime characteristic $p$ and $Q$ be an $\fm$-primary parameter ideal. We give criteria for F-rationality of $R$ using the tight Hilbert function $H^*_Q(n)=\ell(R/(Q^n)^*)$ and the coefficient $e_1^*(Q)$ of the  tight Hilbert polynomial 
$P^*_Q(n)=\sum_{i=0}^d(-1)^ie_i^*(Q)\binom{n+d-1-i}{d-i}.$
We obtain a lower bound for the tight Hilbert function of $Q$ for equidimensional excellent local rings that generalises a result of Goto and Nakamura. We show that if $\dim R=2 $, the Hochster-Huneke graph of $R$ is connected and this lower bound is achieved then $R$ is F-rational.
Craig Huneke asked if the $F$-rationality of unmixed local rings may be characterized by the vanishing of $e_1^*(Q).$ We construct examples to show that without additional conditions, this is not possible.
Let  $R$ be  an excellent, reduced, equidimensional Noetherian local ring and $Q$  be generated by parameter test elements.
 We find formulas for $e_1^*(Q), e_2^*(Q), \ldots, e_d^*(Q)$   in terms of Hilbert coefficients of $Q$,  lengths of local cohomology modules of $R,$ and the length of  the tight closure of the zero submodule of $H^d_{\fm}(R).$ Using these we prove:
$R$ is F-rational  $\iff e_1^*(Q)=e_1(Q) \iff \depth R\geq 2$ and $e_1^*(Q)=0.$
\end{abstract}

\maketitle

\thispagestyle{empty}
\section{Introduction}
The theory of tight closure created by Hochster and Huneke in the 1980's introduced several types of local rings such as F-regular, weakly F-regular, F-rational and F-injective local rings,  see for example \cite {HH1990},\cite{HH2},\cite{Sm97}, etc. It is well known that the Hilbert coefficients can be  used to characterize regular, Cohen-Macaulay and Buchsbaum  local rings. It is natural to expect that F-singularities could be characterized using a certain kind of Hilbert polynomial that involves the tight closure of ideals. The first step in this direction was taken by Shiro Goto and Y. Nakamura. In response to a conjecture of 
K. Watanabe and K. Yoshida \cite{WY}, Goto and Nakamura \cite{Got_Nak} 
proved the following interesting characterization of F-rational local rings. The length of an $R$-module $M$ is denoted by $\ell_R(M).$
The tight closure of an ideal $I$ is denoted by $I^*,$ see Section \ref{sec:prel} for definitions.
\begin{theorem} [\bf Goto-Nakamura, 2001] \label{gn} Suppose $R$ has prime characteristic and it is an equidimensional local ring of dimension $d.$  Suppose that $R$ is a homomorphic image of a Cohen-Macaulay local ring. Then 
{\rm (1)} $e_0(Q)\geq \ell_R(R/Q^*)$ for every $\fm$-primary parameter ideal $Q$ in $R.$\\
{\rm (2)} If $\dim R/\fp=d$ for all $\fp\in \Ass (R),$ and $e_0(Q)=\ell_R(R/Q^*)$ for some parameter ideal $Q$ in $R,$ then $R$ is a Cohen-Macaulay F-rational local ring.
\end{theorem}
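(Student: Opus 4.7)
The plan is to prove (1) and (2) together by induction on $d=\dim R$, in the spirit of Northcott's classical proof of $e_0(Q)\le \ell_R(R/Q)$ but with Cohen-Macaulayness replaced by the colon-capturing property of tight closure for equidimensional homomorphic images of Cohen-Macaulay local rings. The base case $d=0$ is transparent: $R$ is Artinian, so $e_0(Q)=\ell_R(R)$ and $\ell_R(R/Q^*)\le\ell_R(R)=e_0(Q)$, with equality forcing $Q^*=0$, which is F-rationality in dimension zero. For the inductive step one may pass to the completion and enlarge the residue field to be infinite, as these operations are harmless for excellent rings under the stated hypotheses. Pick $x_1\in Q$ superficial for $Q$ and a parameter, and set $\bar R=R/(x_1)$, $\bar Q=Q\bar R$. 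Then $\bar R$ is equidimensional of dimension $d-1$ and remains a homomorphic image of a CM local ring, and superficial element theory gives $e_0(\bar Q;\bar R)=e_0(Q;R)$; the inductive hypothesis then supplies $\ell_{\bar R}(\bar R/\bar Q^*)\le e_0(Q)$.

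The heart of the argument is the length comparison
\[
\ell_R(R/Q^*) \le \ell_{\bar R}(\bar R/\bar Q^*).
\]
The ingredients are (i) the short exact sequence arising from multiplication by $x_1$ on $R/Q^*$, (ii) the containment $\overline{Q^*}\subseteq \bar Q^*$ in $\bar R$, obtained by choosing a test element that survives modulo $x_1$ (guaranteed in the excellent reduced equidimensional setting), and (iii) the Hochster-Huneke colon-capturing property: for any system of parameters $y_1,\dots,y_d$ of $R$ one has $(y_1,\dots,y_{i-1}):_R y_i\subseteq(y_1,\dots,y_{i-1})^*$. Applied with $y_1=x_1$, colon-capturing controls the kernel $(Q^*:_R x_1)/Q^*$ of multiplication by $x_1$ on $R/Q^*$, and combined with the cokernel estimate $\ell_R(R/(Q^*+x_1R))\le \ell_{\bar R}(\bar R/\bar Q^*)$ closes the inductive loop, yielding (1).

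For (2), assume $e_0(Q)=\ell_R(R/Q^*)$. Every inequality in the induction is forced to be an equality, and the unmixedness hypothesis guarantees $\bar R$ remains unmixed at each reduction step so the induction genuinely runs. Sharpness in colon-capturing propagates through the induction to the conclusion that $x_1,\dots,x_d$ is in fact a regular sequence, whence $R$ is Cohen-Macaulay. In a CM local ring $\ell_R(R/Q)=e_0(Q)$, so the hypothesis becomes $\ell_R(R/Q)=\ell_R(R/Q^*)$, forcing $Q=Q^*$. Since a CM local ring of positive characteristic with a single tightly closed parameter ideal is F-rational (the standard converse, using that $0^*_{H^d_{\fm}(R)}=0$ is detected by any one tightly closed parameter ideal under CM), (2) follows.

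The main obstacle is the length comparison in the second paragraph: one must simultaneously track the kernel $(Q^*:_R x_1)/Q^*$ via colon-capturing and the discrepancy between $\overline{Q^*}$ and $\bar Q^*$ appearing in the cokernel, so as to convert a containment of ideals into a sharp comparison of lengths. Running this induction with equality and reading off Cohen-Macaulayness is a separate delicate accounting that genuinely exploits unmixedness, rather than mere equidimensionality, to prevent loss of sharpness at the dimension-drop step — this is precisely why the equidimensional hypothesis suffices for (1) but the stronger unmixed hypothesis is needed for (2).
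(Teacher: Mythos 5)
The paper does not actually prove Theorem \ref{gn}; it quotes it from \cite{Got_Nak} and then reproves/generalizes part (1) as Theorem \ref{thm:ineq_tytHil_multipli} via Lemma \ref{tight intersec}. Measured against either the cited proof or the paper's internal argument, your induction has a fatal flaw at exactly the step you identify as ``the heart of the argument.'' The inequality $\ell_R(R/Q^*)\le\ell_{\bar R}(\bar R/(Q\bar R)^*)$ goes the wrong way: by persistence of tight closure under the surjection $R\to \bar R=R/(x_1)$, the image of $Q^*$ lands inside $(Q\bar R)^*$, and since $x_1\in Q\subseteq Q^*$ the module $\bar R/(Q\bar R)^*$ is a \emph{quotient} of $R/(Q^*+x_1R)=R/Q^*$. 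So one always has $\ell_{\bar R}(\bar R/(Q\bar R)^*)\le \ell_R(R/Q^*)$, and your ``cokernel estimate'' $\ell_R(R/(Q^*+x_1R))\le\ell_{\bar R}(\bar R/(Q\bar R)^*)$ is likewise backwards. What your induction actually needs is the reverse containment, i.e.\ that the contraction of $(Q\bar R)^*$ to $R$ is exactly $Q^*$; that is the genuine content and it is nowhere addressed. Worse, the proposed mechanism cannot supply it: since $x_1\in Q\subseteq Q^*$, multiplication by $x_1$ on $R/Q^*$ is the zero map, so its kernel $(Q^*:_Rx_1)/Q^*$ is all of $R/Q^*$ and colon capturing applied to it yields nothing. (Secondary issues: test elements are invoked although the theorem only assumes ``equidimensional image of CM,'' not excellent and reduced; and unmixedness is generally not preserved by killing a parameter, so the claim that $\bar R$ ``remains unmixed at each step'' in part (2) is unjustified.)

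For contrast, the working arguments avoid dimension-lowering induction entirely. Goto--Nakamura compare $Q$ with the ideals $Q_n=(x_1^n,\dots,x_d^n)$: the strong colon-capturing property $(x_1^{j+1},x_2,\dots,x_d)^*:_Rx_1\subseteq(x_1^{j},x_2,\dots,x_d)^*$ makes multiplication by $x_1^{j}$ an isomorphism from $R/Q^*$ onto $\bigl((x_1^{j})+(x_1^{j+1},x_2,\dots,x_d)^*\bigr)/(x_1^{j+1},x_2,\dots,x_d)^*$, which after iterating over all variables gives $\ell(R/Q_n^*)\ge n^d\,\ell(R/Q^*)$; dividing by $n^d$ and passing to the limit yields $e_0(Q)\ge\ell(R/Q^*)$. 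The present paper instead proves (Lemma \ref{tight intersec}) that $Q^k\cap(Q^{k+1})^*=Q^kQ^*$ and that $Q^k/Q^kQ^*$ is $R/Q^*$-free of rank $\binom{k+d-1}{d-1}$, then sums over the filtration $(Q^k)^*/(Q^{k+1})^*$ to get $\ell(R/(Q^{n+1})^*)\ge\ell(R/Q^*)\binom{n+d}{d}$ and compares leading coefficients. In both cases the key input is a colon statement \emph{inside} $R$ (colon capturing, or the Aberbach--Huneke--Smith criterion), not a comparison of tight closures across the quotient $R\to R/(x_1)$. Your part (2) inherits the same defect, since it relies on ``equality propagating'' through an induction that does not close.
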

For a recent treatment of Goto-Nakamura theorem, see \cite{colen_mult}.
Since $Q^*$ is contained in the integral closure $\overline{Q}$ of $Q,$ 
$e_0(Q)=e_0^*(Q).$ Therefore the   F-rationality  of $R$ is a consequence of the equality
$e^*_0(Q)=\ell(R/Q^*)$ for rings mentioned in  (2) above. This was an indication
that F-singularities could be characterized in terms of the \textit{tight Hilbert function} $H^*_Q(n)=\ell(R/(Q^n)^*).$  Let  $I$ be an $\fm$-primary ideal of $R$ and $R$ be  analytically unramified, i.e. the $\fm$-adic completion $\hat{R}$ is reduced. By a theorem of Rees \cite{rees}, $H^*_I(n)$ is given by a polynomial $P^*_I(n)$ for large $n.$ We call it the \textit{tight Hilbert polynomial of $I$} and write it as
$$P^*_I(n)=\sum_{i=0}^d(-1)^ie_i^*(I)\binom{n+d-1-i}{d-i}.$$
The coefficient $e_0^*(I)$ is the multiplicity $e_0(I)$ of $I.$ The other coefficients $e_i^*(I)\in \mathbb{Z}$ are called the \textit{tight Hilbert coefficients} of $I.$ 
 The tight Hilbert polynomial was introduced in \cite{GMV2019} where it was proved that an analytically unramified Cohen-Macaulay local ring $R$ having prime characteristic is F-rational if and only if $e_1^*(Q)=0$ for some  ideal $Q$ generated by a system of parameters of $R.$ 
This paper is motivated by the following question of  Craig Huneke 
\begin{question}\label{hun_que} Is it true that an unmixed Noetherian local ring $R$ is F-rational if and only if for some ideal $Q$ of $R$ generated by a system of parameters, $e_1^*(Q)=0$?
\end{question} 
We provide a negative answer to Question \ref{hun_que}, see Proposition \ref{prop:genofexampl}. We show that F-rationality can be characterized by the vanishing of  $e_1^*(Q)$ where $Q$ is an ideal generated by parameter test elements which form a system of parameters of $R$ where $R$ is reduced, excellent and equidimensional local Noetherian ring, see Corollary \ref{cor:main}.

This paper is organized as follows. In Section \ref{sec:prel}, we review the necessary background material related to tight closure of ideals, test ideals, F-rational local rings, excellent rings and the tight closure of the zero submodule of $H^d_\fm (R).$ 
In Section \ref{sec:F-rat}, we generalize the result of Goto-Nakamura [Theorem \ref{gn} (1)] for equidimensional excellent local rings by proving a lower bound for the tight Hilbert function.

\begin{theorem}\label{thm:ineq_tytHil_multipli_intr} Let $(R,\fm)$ be an equidimensional excellent local ring of prime characteristic $p$ and $Q$ be an ideal generated by a system of parameters for $R$. Then for all $n\ge 0,$
$$\ell (R/(Q^{n+1})^*) \ge \ell(R/Q^*) \binom{n+d}{d}.$$
\end{theorem}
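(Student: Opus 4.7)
The plan is to prove the inequality by induction on $n$, with the base case $n=0$ being the tautology $\ell(R/Q^*)\ge\ell(R/Q^*)$. For the inductive step, the short exact sequence
$$0 \to (Q^n)^*/(Q^{n+1})^* \to R/(Q^{n+1})^* \to R/(Q^n)^* \to 0$$
together with the Pascal identity $\binom{n+d}{d}=\binom{n+d-1}{d}+\binom{n+d-1}{d-1}$ reduces the task to the following graded-piece estimate:
$$\ell\bigl((Q^n)^*/(Q^{n+1})^*\bigr) \ge \binom{n+d-1}{d-1}\,\ell(R/Q^*).$$

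Writing $Q=(x_1,\dots,x_d)$, I would establish this estimate by constructing an $R$-linear map
$$\phi\colon \bigoplus_{|\alpha|=n} R/Q^* \longrightarrow (Q^n)^*/(Q^{n+1})^*,\qquad (\bar a_\alpha)_\alpha \longmapsto \sum_{|\alpha|=n} a_\alpha x^\alpha \pmod{(Q^{n+1})^*}.$$
Well-definedness follows from $Q^*\cdot Q^n\subseteq (Q\cdot Q^n)^*=(Q^{n+1})^*$, itself an instance of the containment $I^*J\subseteq (IJ)^*$ which is checked directly on Frobenius powers via test elements. Since the source has exactly $\binom{n+d-1}{d-1}$ summands, it suffices to prove that $\phi$ is injective.

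Injectivity amounts to the following monomial-independence-modulo-tight-closure assertion: if $a_\alpha\in R$ for $|\alpha|=n$ and $\sum a_\alpha x^\alpha \in (Q^{n+1})^*$, then each $a_\alpha\in Q^*$. To prove it, fix a test element $c\in R^{\circ}$ and use the identity $(Q^{n+1})^{[q]}=(x_1^q,\dots,x_d^q)^{n+1}$ valid in characteristic $p$ to obtain
$$c\sum_{|\alpha|=n} a_\alpha^q x^{q\alpha}\in (x_1^q,\dots,x_d^q)^{n+1}\qquad\text{for all }q=p^e\gg 0.$$
Since $x_1^q,\dots,x_d^q$ remains a system of parameters in the equidimensional excellent ring $R$, an induction on $d$ based on the colon-capturing identity $(x_1^q,\dots,x_{i-1}^q)^*:_R x_i^q=(x_1^q,\dots,x_{i-1}^q)^*$ should separate the distinct monomials $x^{q\alpha}$ and deliver $c\,a_\alpha^q\in (x_1^q,\dots,x_d^q)^*$ for every $\alpha$. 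Combined with the idempotence of tight closure, this forces $a_\alpha\in Q^*$ and closes the induction.

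The main obstacle is precisely this monomial-separation step. In a non--Cohen--Macaulay ring the monomials $x^{q\alpha}$ of degree $qn$ fail to be $R$-linearly independent modulo $(x_1^q,\dots,x_d^q)^{n+1}$, so the deficit must be absorbed into tight closure via a careful nested colon-capturing bookkeeping. A conceptually cleaner alternative is to pass to the absolute integral closure $R^+$, where $x_1,\dots,x_d$ forms a regular sequence by the Hochster--Huneke big Cohen--Macaulay algebra theorem, and then invoke the coincidence of tight closure and plus closure for parameter-power ideals in reduced equidimensional excellent characteristic-$p$ local rings; either route introduces genuine technical content beyond the formal inductive framework.
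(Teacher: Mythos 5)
Your skeleton coincides with the paper's: the telescoping decomposition, the reduction to the graded-piece estimate $\ell((Q^n)^*/(Q^{n+1})^*) \ge \binom{n+d-1}{d-1}\ell(R/Q^*)$, and the identification of the crucial claim --- that $\sum_{|\alpha|=n} a_\alpha x^\alpha \in (Q^{n+1})^*$ forces every $a_\alpha \in Q^*$ --- are all exactly what the paper does (this claim is its Lemma \ref{tight intersec}, which asserts $Q^n \cap (Q^{n+1})^* = Q^nQ^*$ and that $Q^n/Q^nQ^*$ is a free $R/Q^*$-module of rank $\binom{n+d-1}{d-1}$). The problem is that you do not prove this claim, and you say so yourself: you call it ``the main obstacle'' and offer only two sketches, conceding that either route ``introduces genuine technical content beyond the formal inductive framework.'' That is the heart of the theorem, so as written the argument is incomplete.

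Moreover, the specific tool you propose --- iterated colon-capturing of the form $(x_1^q,\dots,x_{i-1}^q)^* :_R x_i^q = (x_1^q,\dots,x_{i-1}^q)^*$ --- is not adapted to the problem. What is needed is to colon a degree-$n$ monomial $x^\alpha$ out of the ideal generated by $Q^{n+1}$ together with the \emph{other} degree-$n$ monomials; distinct monomials of the same degree do not form part of a system of parameters, and one-variable-at-a-time colon-capturing does not separate them. The paper resolves this by passing to the polynomial subring $A=\mathbb{F}_p[x_1,\dots,x_d]$ and invoking the generalized colon-capturing theorem of Aberbach--Huneke--Smith \cite[Theorem 2.3]{AHS95}, namely $(\mathfrak{a}R)^* :_R m \subseteq ((\mathfrak{a}:_A m)R)^*$ for a monomial $m$ in the $x_i$; since $(q^{n+1}+J_i):_A m_i \subseteq q$ holds in the polynomial ring, each coefficient lands in $(qR)^*=Q^*$. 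Your alternative via $R^+$ would likewise require a nontrivial input (that tight closure of powers of parameter ideals is detected by monomial independence in a big Cohen--Macaulay algebra), which again amounts to the same missing lemma. Until one of these routes is actually carried out, there is a genuine gap precisely at the load-bearing step.
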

\begin{corollary} 
Let $(R,\fm)$ be a reduced equidimensional excellent local ring of prime characteristic $p$ and $Q$ be an ideal generated by a system of parameters for $R$. Then $$e_0(Q)\geq \ell(R/Q^*).$$
\end{corollary}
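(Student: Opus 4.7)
The plan is to deduce the corollary from Theorem \ref{thm:ineq_tytHil_multipli_intr} by taking an asymptotic limit. Under the hypotheses (reduced + excellent), the ring $R$ is Nagata, hence its $\fm$-adic completion $\widehat{R}$ is reduced, i.e., $R$ is analytically unramified. By the theorem of Rees cited in the introduction, the tight Hilbert function $H^*_Q(n) = \ell(R/(Q^n)^*)$ agrees with the polynomial $P^*_Q(n)$ for $n \gg 0$, and its leading term has the form $\frac{e_0^*(Q)}{d!}\, n^d + O(n^{d-1})$.

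First I would record the standard identification $e_0^*(Q) = e_0(Q)$: since $(Q^n)^* \subseteq \overline{Q^n}$, one has $\ell(R/(Q^n)^*) \geq \ell(R/\overline{Q^n})$, and the two normalized leading coefficients coincide because both differ from $e_0(Q)/d!$ by lower order terms (the multiplicity of an ideal equals that of its integral closure, and tight closure sits between the ideal and its integral closure).

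Next, I would apply Theorem \ref{thm:ineq_tytHil_multipli_intr} for arbitrary $n$, obtaining
\[
\ell\bigl(R/(Q^{n+1})^*\bigr) \;\geq\; \ell(R/Q^*)\binom{n+d}{d}.
\]
For $n$ large enough, the left-hand side equals $P^*_Q(n+1)$, which is a polynomial in $n$ of degree $d$ with leading coefficient $e_0(Q)/d!$. Dividing both sides by $\binom{n+d}{d}$ and letting $n \to \infty$, the left-hand side tends to $e_0(Q)$ while the right-hand side tends to $\ell(R/Q^*)$, yielding the inequality $e_0(Q) \geq \ell(R/Q^*)$.

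There is no real obstacle here beyond verifying that the reduced + excellent hypothesis delivers analytic unramifiedness so that Rees's polynomial exists; the rest is a direct asymptotic comparison using the inequality from Theorem \ref{thm:ineq_tytHil_multipli_intr}.
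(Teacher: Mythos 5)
Your argument is correct and is essentially the paper's own proof: both deduce the inequality by noting that reduced plus excellent gives analytic unramifiedness, so the tight Hilbert function is eventually polynomial with leading coefficient $e_0(Q)$, and then comparing leading terms (equivalently, your limit after dividing by $\binom{n+d}{d}$) against the lower bound from Theorem \ref{thm:ineq_tytHil_multipli_intr}. No substantive difference.
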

In the next result, we show that if equality holds for some $n$ in Theorem \ref{thm:ineq_tytHil_multipli_intr} then $R$ is F-rational which can be considered as a generalization of Goto-Nakamura result [Theorem \ref{gn} (2)] under additional hypothesis.
\begin{theorem}  Let $(R, \fm)$ be a Noetherian local ring of dimension $d$ and prime characteristic $p.$ Let $(S,n)$ be a Cohen-Macaulay local ring of dimension $d$ and $Q(R)$ be the total quotient ring of $R$ such that $R\subseteq S \subseteq Q(R)$ and $S$ is a finite $R$-module. Let $Q$ be an ideal of $R$ generated by a system of parameters. Suppose that for some fixed $n\geq 0$,
$$\ell(R/(Q^{n+1})^*)=e_0(Q)\binom{n+d}{d}.$$
Then $R=S.$ In particular $R$ is F-rational.
\end{theorem}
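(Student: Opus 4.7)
The strategy is to force $R = S$ by squeezing the hypothesis into a chain of length equalities and applying Nakayama, then to upgrade the resulting equality $(Q^{n+1})^* = Q^{n+1}$ to $Q^* = Q$ and conclude F-rationality via the standard CM characterization.

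\emph{Step 1: a Frobenius containment.} Since $S$ is a finite $R$-submodule of $Q(R)$, each generator has the form $r/t$ with $t$ a non-zerodivisor in $R$; the product of these denominators yields a non-zerodivisor $c \in R$ with $cS \subseteq R$. For any $x = \sum_j i_j s_j \in Q^{n+1}S \cap R$ with $i_j \in Q^{n+1}$ and $s_j \in S$, the Frobenius identity $(a+b)^q = a^q + b^q$ in characteristic $p$ gives
\[
c x^q \;=\; \sum_j i_j^q\,(c s_j^q) \;\in\; (Q^{n+1})^{[q]}
\]
for every $q = p^e$ (noting $c s_j^q \in cS \subseteq R$), so $Q^{n+1}S \cap R \subseteq (Q^{n+1})^*$. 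The same argument also yields $Q^* \cdot Q^n \subseteq (Q^{n+1})^*$, a containment needed later.

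\emph{Step 2: chain of lengths.} Combining Step 1 with the injection $R/(Q^{n+1}S \cap R) \hookrightarrow S/Q^{n+1}S$ gives
\[
\ell_R(R/(Q^{n+1})^*) \;\le\; \ell_R(R/(Q^{n+1}S\cap R)) \;\le\; \ell_R(S/Q^{n+1}S).
\]
Writing $k = R/\fm$ and $k' = S/\fn$, one has $\ell_R(M) = [k':k]\,\ell_S(M)$ for every finite-length $S$-module $M$; and because $S$ is Cohen--Macaulay and $QS$ is a parameter ideal, $\ell_S(S/(QS)^{n+1}) = e_0(QS;S)\binom{n+d}{d}$. For every minimal prime $\fp$ of $R$ with $\dim R/\fp = d$, $R_\fp$ is Artinian and the non-zerodivisors of $R$ are units there, so $R_\fp = Q(R)_\fp \supseteq S_\fp$ forces $S_\fp = R_\fp$; hence $\dim_R(S/R) < d$, and the short exact sequence $0 \to R \to S \to S/R \to 0$ yields $e_0(Q;R) = e_0(Q;S) = [k':k]\,e_0(QS;S)$. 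Therefore $\ell_R(S/Q^{n+1}S) = e_0(Q)\binom{n+d}{d}$, and the hypothesis forces equality throughout the chain.

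\emph{Step 3: Nakayama and F-rationality.} Equality at the second step means the map $R \to S/Q^{n+1}S$ is surjective, so $S = R + Q^{n+1}S \subseteq R + \fm S$; Nakayama applied to the finite $R$-module $S/R$ gives $R = S$. In particular $R$ is Cohen--Macaulay and $(Q^{n+1})^* = Q^{n+1}$. Combining with the second containment of Step 1, $Q^* \cdot Q^n \subseteq Q^{n+1}$, so any $y \in Q^*$ annihilates $Q^n/Q^{n+1}$ as an $R/Q$-module through its class $\bar y \in R/Q$; but in a Cohen--Macaulay ring $Q^n/Q^{n+1}$ is a free $R/Q$-module of positive rank, forcing $\bar y = 0$ and thus $Q^* = Q$. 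The standard characterization of F-rationality in Cohen--Macaulay local rings (a single parameter ideal is tightly closed if and only if $0^*_{H^d_\fm(R)} = 0$ if and only if $R$ is F-rational) then completes the proof.

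\emph{Main obstacle.} The delicate point is the length/multiplicity bookkeeping in Step 2: without the embedding $S \subseteq Q(R)$, the dimension drop $\dim_R(S/R) < d$ and the identity $e_0(Q;R) = [k':k]\,e_0(QS;S)$ both fail, and the chain of inequalities no longer closes up to the hypothesis. The final passage from $(Q^{n+1})^* = Q^{n+1}$ back to $Q^* = Q$ is also essential and would fail without Cohen--Macaulayness, since it relies on the freeness of $Q^n/Q^{n+1}$ over $R/Q$.
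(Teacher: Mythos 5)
Your proof is correct and follows essentially the same route as the paper: the same sandwich of length inequalities $\ell(R/(Q^{n+1})^*)\le \ell_R(R/(Q^{n+1}S\cap R))\le \ell_R(S/Q^{n+1}S)=e_0(Q)\binom{n+d}{d}$ (using contraction of tight closure from $S$ and $e_0(Q;R)=e_0(Q;S)$ since $\dim_R(S/R)<d$), followed by Nakayama applied to $S/R$ to get $R=S$, and then the descent from $(Q^{n+1})^*=Q^{n+1}$ to $Q^*=Q$ via freeness of $Q^n/Q^{n+1}$ over $R/Q$. The only differences are cosmetic: you prove the contraction containment $Q^{n+1}S\cap R\subseteq (Q^{n+1})^*$ by hand rather than citing \cite[Proposition 10.1.5]{BH}, and you replace the paper's appeal to its Lemma 3.4 by a direct colon argument.
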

If $d=2$ and the Hochster-Huneke graph of $R,$ denoted by $\mathcal{G}(R)$,  is connected then we can take $S$ in the above theorem to be the $S_2$-ification of $R$ and obtain the following
\begin{corollary}
Let $(R, \frak m)$ be a Noetherian local ring with $\dim (R/\fp)=2$ for all $\fp\in \Ass R$ of prime characteristic $p$ such that $\mathcal{G}(R)$ is connected. If for an ideal $Q$ generated by a system of parameters for $R$ and for some $n\ge 0,$ 
$$\ell (R/(Q^{n+1})^*) =e_0(Q) \binom{n+2}{2}$$
then $R$ is F-rational.
\end{corollary}

Let $(R,\fm)$ be a $d$-dimensional local Noetherian ring and $I$ be an $\fm$-primary ideal. {Then the \textit{Hilbert function} of $I$ is defined as $H_I(n)=\ell(R/{I^n}).$ For large $n,$ it coincides with a polynomial of degree $d$  called the \textit{Hilbert polynomial} of $I$} and it is written as 
 $$P_I(n)=e_0(I)\binom{n+d-1}{d}-e_1(I)\binom{n+d-2}{d-1}+\cdots +(-1)^de_d(I).$$
If  $R$ is analytically unramified then by a Theorem of Rees \cite{rees}, the \textit{normal Hilbert function} of an $\fm$-primary ideal $I,$ namely $\overline{H_I}(n)=\ell(R/\overline{I^n})$ coincides with a polynomial of degree $d$ for large $n.$ This polynomial is called the\textit{ normal Hilbert polynomial of $I$} and is given by
 $$\overline{P_I}(n)=e_0(I)\binom{n+d-1}{d}-\overline{e_1}(I)\binom{n+d-2}{d-1}+\cdots +(-1)^d\overline{e_d}(I).$$
\rm{In \cite{MTV1990} M. Moral\'es,  N. V. Trung and O. Villamayor  characterized regular local rings in terms of the equality $\overline{e_1}(Q) = e_1(Q)$ for a parameter ideal $Q$ of an excellent analytically unramified local ring. It is worth noting that this result was proved in \cite{MSV} by replacing the excellence hypothesis of $R$ with its unmixedness.
In Section \ref{sec:on equality Buchs}  we find an analogous characterization for F-rational local rings as a consequence of explicit formulas for the tight Hilbert coefficients in terms of the lengths of local cohomology modules $H^j_{\fm}(R)$ for $0\leq j \leq d-1,$ $e_i(Q)$ for $0\leq i \leq d$ and $\ell(0^*_{H^d_{\frak m}(R)}).$
\begin{theorem}
Let $(R,\fm)$ be an excellent reduced equidimensional local ring of prime characteristic $p$ and dimension $d\geq 2.$  Let $x_1,x_2,\ldots,x_d$ be parameter test elements and $Q=(x_1,x_2,\ldots,x_d)$ be $\fm$-primary. Then \\
{\rm (1) }$e_1^*(Q)=e_0(Q)-\ell(R/Q^*)+e_1(Q) \text{ and } e_j^*(Q)=e_j(Q)+e_{j-1}(Q) \text{ for all } 2\leq j \leq d,$\\
{\rm (2)} $\displaystyle e_1^*(Q)=\sum_{i=2}^{d-1} \binom{d-2}{i-2}\ell(H^i_{\frak m}(R)) + \ell(0^*_{H^d_{\frak m}(R)}),$\\
{\rm (3)} $\displaystyle e_i^*(Q)=(-1)^{i-1}\left[ \sum_{j=0}^{d-i}\binom{d-i-1}{j-2}\ell(H^j_{\fm}(R))+\ell(H^{d-i+1}_{\fm}(R))\right]  \text{ for } i=2, \ldots, d-1$ and\\
{\rm (4)} $e_d^*(Q)=(-1)^{d-1}\ell(H^1_{\fm}(R)).$
\end{theorem}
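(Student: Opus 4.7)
The plan is to first establish the comparison formulas in (1) via a single length identity relating $\ell(R/(Q^n)^*)$ to $\ell(R/Q^{n-1})$ and $\ell(R/Q^*)$, then deduce (2)--(4) by substituting classical expressions for $e_i(Q)$ in terms of local cohomology.

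For (1), the key identity I would prove is
$$\ell\bigl(R/(Q^n)^*\bigr) \;=\; \ell(R/Q^{n-1}) \;+\; \ell(R/Q^*)\binom{n+d-2}{d-1}\qquad\text{for } n\gg 0. \qquad(\star)$$
Once $(\star)$ is established, the derivation of (1) is pure binomial bookkeeping: the right-hand side equals $P_Q(n-1)+\ell(R/Q^*)\binom{n+d-2}{d-1}$, and expanding via $\binom{n+d-2-i}{d-i}=\binom{n+d-1-i}{d-i}-\binom{n+d-2-i}{d-i-1}$, reindexing, and matching coefficients against $P^*_Q(n)=\sum_i(-1)^ie_i^*(Q)\binom{n+d-1-i}{d-i}$ produces $e_0^*(Q)=e_0(Q)$, $e_1^*(Q)=e_0(Q)-\ell(R/Q^*)+e_1(Q)$, and $e_j^*(Q)=e_j(Q)+e_{j-1}(Q)$ for $j\ge 2$. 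The proof of $(\star)$ itself is where the parameter test element hypothesis does the work: because each $x_i$ annihilates $J^*/J$ for every parameter ideal $J$, one extracts colon identities such as $(Q^{n+1})^*:_R x_i = (Q^n)^*$ and compatible surjections which, via the inverse-limit realization $H^d_\fm(R)=\varinjlim_t R/(x_1^t,\ldots,x_d^t)$ and the canonical map $(x_1^t,\ldots,x_d^t)^*/(x_1^t,\ldots,x_d^t)\hookrightarrow 0^*_{H^d_\fm(R)}$, identify the extra length $\ell(R/Q^*)$ with contributions from each of the $\binom{n+d-2}{d-1}$ monomial strata of degree $n-1$ in $x_1,\ldots,x_d$.

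For (2)--(4), given (1), one substitutes the classical Goto--Sakurai type formulas for $e_i(Q)$ with $Q$ generated by a d-sequence; parameter test elements in an equidimensional excellent reduced ring enjoy d-sequence-like colon properties, since annihilating $J^*/J$ for parameter ideals $J$ propagates to sharp colon identities on the $x_i$'s. These express $e_i(Q)$ as alternating binomial sums of $\ell(H^j_\fm(R))$; combined with the expression $\ell(R/Q^*)=e_0(Q)+\ell(0^*_{H^d_\fm(R)})+(\text{corrections from lower }H^j_\fm(R))$ coming from the test-element-controlled structure of $H^d_\fm(R)$, and collecting via Pascal's identity $\binom{d-2}{j-2}+\binom{d-2}{j-1}=\binom{d-1}{j-1}$, one reads off (2), (3), (4). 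The reducedness of $R$ forces $H^0_\fm(R)=0$, consistent with its absence from the formulas.

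The principal obstacle is the proof of $(\star)$. While the ingredients---colon behavior under parameter test elements, the inverse-limit description of $H^d_\fm(R)$, and the identification of tight-closure quotients with $0^*_{H^d_\fm(R)}$---are each standard, stitching them together into the precise length identity requires careful filtration analysis of $(Q^n)^*$ and delicate tracking of how $0^*_{H^d_\fm(R)}$ and the lower local cohomology modules contribute to each graded piece. The binomial bookkeeping in the second stage is routine by comparison.
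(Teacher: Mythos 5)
Your overall architecture is exactly the paper's: the crux is the length identity $(\star)$, namely $\ell(R/(Q^{n+1})^*)=\ell(R/Q^{n})+\ell(R/Q^*)\binom{n+d-1}{d-1}$, after which (1) is binomial bookkeeping and (2)--(4) follow by substituting the classical formulas for the $e_i(Q)$ of a standard parameter ideal together with an expression for $\ell(R/Q^*)$. However, you have correctly located the hard part and then not done it. The paper obtains $(\star)$ from two specific inputs that your sketch does not supply: (i) Aberbach's theorem that for $Q$ generated by parameter test elements one has $(Q^{n+1})^*=Q^nQ^*$ for all $n\ge 1$, and (ii) the fact (Lemma 3.1 of the paper, proved via the Aberbach--Huneke--Smith colon-capturing result $ (I^*:_Rm)\subseteq ((I:_Am)R)^*$ in a polynomial subring $A=\mathbb F_p[x_1,\ldots,x_d]$) that $Q^n/Q^nQ^*$ is a \emph{free} $R/Q^*$-module of rank $\binom{n+d-1}{d-1}$. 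Your proposed substitute, the colon identity $(Q^{n+1})^*:_Rx_i=(Q^n)^*$, is not the statement that is needed, is not justified, and I do not see how it produces the precise count $\ell(R/Q^*)\binom{n+d-1}{d-1}$ without the freeness statement; the appeal to "monomial strata" is exactly the freeness claim in disguise, and that is where the real work lies. You acknowledge this yourself ("the principal obstacle is the proof of $(\star)$"), so the proposal is a correct plan rather than a proof.

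For (2) there is a second missing ingredient: the paper needs the Ma--Quy formula $\ell(Q^*/Q)=\sum_{i=0}^{d-1}\binom{d}{i}\ell(H^i_{\fm}(R))+\ell(0^*_{H^d_{\fm}(R)})$ for $Q$ generated by parameters in $\tau_{par}(R)$, which is a substantial theorem and not something that falls out of "the test-element-controlled structure of $H^d_{\fm}(R)$." Note also that your displayed shape $\ell(R/Q^*)=e_0(Q)+\ell(0^*_{H^d_{\fm}(R)})+(\text{corrections})$ has the wrong sign: the correct identity is $\ell(R/Q^*)=e_0(Q)-\sum_{i=1}^{d-1}\binom{d-1}{i-1}\ell(H^i_{\fm}(R))-\ell(0^*_{H^d_{\fm}(R)})$, and with your sign the term $\ell(0^*_{H^d_{\fm}(R)})$ would appear in (2) with a minus sign, contradicting the statement. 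The remaining substitutions for (3) and (4), using that parameter test elements form a standard system of parameters and the Schenzel--Trung formulas for $e_i(Q)$, are as you describe and match the paper.
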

\begin{corollary}
Let $(R,\fm)$ be an excellent reduced equidimensional local ring of prime characteristic $p$ and dimension $d\geq 2.$  Let $x_1,x_2,\ldots,x_d$ be parameter test elements and $Q=(x_1,x_2,\ldots,x_d)$ be $\fm$-primary. Then the following are equivalent.\\
\centerline{{\rm (i)} $R $  is F-rational
{\rm (ii)}  $e_1^*(Q)=e_1(Q)$
\rm{(iii)} $e_1^*(Q)=0$ and $\depth R\geq 2.$}
\end{corollary}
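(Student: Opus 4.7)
The plan is to deduce the corollary directly from the preceding theorem, using part (1) for the equivalence $(i)\Leftrightarrow(ii)$ and part (2) for the equivalence $(i)\Leftrightarrow(iii)$. Each direction reduces to inspection of the formulas, together with known facts about F-rational rings (F-rationality implies Cohen-Macaulayness and that parameter ideals are tightly closed, and in the Cohen-Macaulay case F-rationality is equivalent to $0^*_{H^d_{\fm}(R)}=0$).

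For $(i)\Leftrightarrow(ii)$: by part (1) of the theorem, $e_1^*(Q)=e_1(Q)$ is equivalent to $e_0(Q)=\ell(R/Q^*)$. If $R$ is F-rational, then $R$ is Cohen-Macaulay and $Q^*=Q$, so $\ell(R/Q^*)=\ell(R/Q)=e_0(Q)$, giving (ii). Conversely, the equality $e_0(Q)=\ell(R/Q^*)$ together with the equidimensional and excellent hypotheses places us in the setting of Goto--Nakamura (Theorem \ref{gn}(2)), which yields F-rationality (the unmixedness $\dim R/\fp=d$ for all $\fp\in\Ass R$ follows from equidimensional plus reduced excellent, since then $R$ has no embedded primes).

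For $(i)\Leftrightarrow(iii)$: assume $R$ is F-rational. Then $R$ is Cohen-Macaulay so $H^i_{\fm}(R)=0$ for all $i<d$ and $\depth R=d\ge 2$; moreover $0^*_{H^d_{\fm}(R)}=0$. Plugging these vanishings into the formula of part (2) of the theorem, every summand vanishes, so $e_1^*(Q)=0$. Conversely, assume $\depth R\ge 2$ and $e_1^*(Q)=0$. The depth hypothesis already forces $H^0_{\fm}(R)=H^1_{\fm}(R)=0$. The formula
\[
e_1^*(Q)=\sum_{i=2}^{d-1}\binom{d-2}{i-2}\ell(H^i_{\fm}(R))+\ell(0^*_{H^d_{\fm}(R)})
\]
is a sum of non-negative integers, so its vanishing forces each $\ell(H^i_{\fm}(R))=0$ for $2\le i\le d-1$ as well as $\ell(0^*_{H^d_{\fm}(R)})=0$. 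Combined with the depth hypothesis, $H^i_{\fm}(R)=0$ for all $i<d$, so $R$ is Cohen-Macaulay, and then the vanishing of $0^*_{H^d_{\fm}(R)}$ is exactly F-rationality.

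There is no real obstacle here beyond recognizing that the previous theorem has already done the work: part (1) converts (ii) into the Goto--Nakamura equality, and part (2) writes $e_1^*(Q)$ as a sum of non-negative length invariants whose simultaneous vanishing characterizes F-rationality once depth is at least $2$. The only minor subtlety is verifying that the equidimensional plus reduced excellent hypotheses of the theorem imply the unmixedness assumption needed to invoke Theorem \ref{gn}(2) in the $(ii)\Rightarrow(i)$ direction.
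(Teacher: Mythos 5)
Your proposal is correct and takes essentially the same route as the paper: both directions of $(i)\Leftrightarrow(ii)$ reduce via Theorem~\ref{tyt_coeff}(1) to the Goto--Nakamura equality $e_0(Q)=\ell(R/Q^*)$ (with the unmixedness hypothesis supplied by reduced plus equidimensional), and $(i)\Leftrightarrow(iii)$ follows from the non-negativity of the summands in the formula of Theorem~\ref{tyt_coeff}(2) together with Remark~\ref{rem:tytcl=0iffRisFrat}. The only cosmetic difference is that for $(i)\Rightarrow(ii)$ the paper computes the tight Hilbert function directly from $(Q^{n+1})^*=Q^{n+1}$, whereas you read the equality off from part (1); both are valid.
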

In Section \ref{sec:hun_coun}, we construct examples to illustrate some of the above results.
\subsection{Notation and Conventions}
All the rings in this paper are commutative Noetherian rings with multiplicative identity 1. We use $(R,\fm, k)$ to denote local ring $R$ with unique maximal ideal $\fm$ and the residue field $k:=R/\fm.$  For basic results on Cohen-Macaulay rings, excellent rings, tight closure, Hilbert functions and multiplicity we refer the reader to \cite{BH} and \cite{Mat}.

{\bf Acknowledgements.} J. K. Verma would like to thank Prof. Craig Huneke for inviting him to University of Virginia in 2019 for discussions and for asking Question 1.2 which has led to this paper. Thanks are also due to Ian Aberbach for informing  us about his paper [1]. We thank the referees for a careful reading and several  suggestions which improved the paper.

\section{Preliminaries}\label{sec:prel} 
\noindent {In this section, we set up some notation and recall results needed in later sections.}
\subsection{Background on tight closure} Let $R$ be a commutative ring and $I$ be an ideal of $R.$ An element $x\in R$ is said to be \textit{integral} over $I$ if $$x^n+a_1x^{n-1}+a_2x^{n-2}+\cdots +a_n=0$$ for some $a_i\in I^i$ for $1\leq i \leq n.$ The \textit{integral closure} of $I,$ denoted by $\overline{I}$ is the collection of all elements that are integral over $I.$\\
Let $R$ be a Noetherian ring of prime characteristic $p$ and $R^\circ$ denote the subset of $R$ consisting of all elements which are not in any minimal prime ideal of $R.$ For $I=(x_1,\ldots ,x_n),$ let $I^{[p^e]}=(x_1^{p^e},\ldots ,x_n^{p^e}).$ The\textit{ tight closure} of $I,$ denoted by $I^*,$ is the set of all elements $x$ for which there exists some $c\in R^\circ$ such that $cx^{p^e}\in I^{[p^e]}$ for all $p^e>>0.$ An ideal $I$ is said to be \textit{tightly closed} if $I=I^*.$
For any ideal $I,$ we have $I\subseteq  I^*\subseteq \overline{I}.$
\begin{definition}
The \textit{test ideal} of $R,$ denoted by $\tau(R)$ is the ideal generated by elements $c\in R$ which satisfies  any of the following equivalent conditions.\\
{\rm (i)} $cx^q\in I^{[q]}$ for all $q=p^0,p^1,p^2,\ldots, $ whenever $x\in I^*$ for any ideal $I$ of $R.$\\
{\rm (ii)} $cx\in I$ whenever $x\in I^*$ for any ideal $I$ of $R.$\\
An element of $\tau(R) \cap R^\circ$ is called a \textit{test element}.
\end{definition}
 A Noetherian ring  $R$ is said to be \textit{weakly F-regular} if every ideal of $R$ is tightly closed.
Note that the test ideal of $R$ is the unit ideal if and only if $R$ is weakly F-regular.
Recall that a \textit{parameter ideal of height $n$} is an ideal of height $n$ generated by $n$ elements. For  excellent local equidimensional rings, parameter ideals are those generated by a part of a system of parameters for $R$ \cite{Sm_95}. 

\begin{definition}
The \textit{parameter test ideal} of $R,$ denoted by $\tau_{par}(R),$ is the ideal generated by $c\in R$ such that $cI^*\subset I$ for all parameter ideals $I$ of $R$ (equivalently, $cx^q\in I^{[q]}$ for all $q=p^e,$ $e=0,1,2,\ldots$). An element of $\tau_{par}(R)\cap R^\circ$ is called a \textit{parameter test element}.
\end{definition}
\begin{definition}
A Noetherian ring $R$ is called  \textit{F-rational} if all parameter ideals are tightly closed.
\end{definition}
Let $(R,\fm)$ be a $d$-dimensional local Noetherian ring and $x_1,\ldots ,x_d$ be a system of parameters. Then the local cohomology module $H^d_{\fm}(R)$ can be expressed as the $d^{th}$ cohomology of the \v Cech complex with respect to $x:=x_1,\ldots ,x_d$ since $H^d_{\fm}(R)\cong H^d_{I}(R),$ where $I=(x_1,\ldots ,x_d).$ Any element of $H^d_{\fm}(R)$ can be represented as $\eta:=\left[\frac{r}{x_1^ix_2^i\cdots x_d^i} \right].$
Let $R$ be a ring of characteristic $p>0.$ The Frobenius map $F:R\rightarrow R$ defined by $F(r)=r^p$ naturally induces an action called the Frobenius action on $H^d_{\fm}(R)$ which takes an element $\eta=\left[\frac{r}{(x_1x_2\cdots x_d)^i} \right]$ to $F(\eta) =\left[\frac{r^p}{(x_1x_2\cdots x_d)^{ip}} \right].$ Similarly, the e$^{th}$ iteration of the Frobenius map $F^e:R \rightarrow R$ defined as $F^e(r)=r^{p^e}$ induces a similar action on $H^d_{\fm}(R).$
\begin{definition} Let $(R,\fm)$ be a Noetherian local ring of characteristic $p.$ Then
$$0^*_{H^d_{\fm}(R)}=\{\eta\in H^d_{\fm}(R):\exists \; c\in R^{\circ}\text{ such that }\; cF^e(\eta)=0 \text{ for all }e>>0 \}.$$ 
\end{definition} 
We record a result from \cite{Sm94} which reveals the interplay of tight closure of the zero submodule of $H^d_{\fm}(R)$ with tight closure of ideal generated by a system of parameters of $R.$
\begin{theorem}\cite[Proposition 3.3(i)]{Sm94}\label{Smthm:0*_Frat}
Let $(R,\fm)$ be an excellent equidimensional local ring of dimension $d,$ and let $x_1,\ldots ,x_d$ be a system of parameters. Then any $z\in (x_1,\ldots ,x_d)^*$ uniquely determines an element $\eta=\left[\frac{z}{x_1x_2\cdots x_d} \right]\in 0^*_{H^d_{\fm}(R)}.$ Conversely, if $\eta=\left[\frac{z}{x_1x_2\cdots x_d} \right]\in 0^*_{H^d_{\fm}(R)},$ then $z\in (x_1,\ldots ,x_d)^*.$
\end{theorem}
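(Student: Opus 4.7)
The plan is to exploit the standard \v{C}ech/direct-limit description
$H^d_{\fm}(R)=\varinjlim_i R/(x_1^i,\ldots,x_d^i),$
with transition maps given by multiplication by $x_1\cdots x_d$ and with $[z/(x_1\cdots x_d)^i]$ denoting the image of $z$ from the $i$-th term. In this description one has the explicit vanishing criterion
$[z/(x_1\cdots x_d)^i]=0$ in $H^d_{\fm}(R)$ if and only if $(x_1\cdots x_d)^k z\in(x_1^{k+i},\ldots,x_d^{k+i})$ for some $k\ge 0$,
together with the Frobenius formula $F^e\bigl([z/(x_1\cdots x_d)^i]\bigr)=[z^{p^e}/(x_1\cdots x_d)^{ip^e}]$. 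Both directions then reduce to formal \v{C}ech manipulations, save for one step that is the genuine content of the statement.

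For the forward direction, suppose $z\in(x_1,\ldots,x_d)^*$ and pick $c\in R^\circ$ with $cz^q\in(x_1^q,\ldots,x_d^q)$ for all $q=p^e\gg 0$. Writing $cz^q=\sum_{i=1}^d a_i x_i^q$, the containment already witnesses $[cz^q/(x_1\cdots x_d)^q]=0$ with $k=0$, which is precisely $cF^e(\eta)=0$; hence $\eta\in 0^*_{H^d_{\fm}(R)}$. The \v{C}ech formulas also make the assignment $z\mapsto[z/(x_1\cdots x_d)]$ well-defined as a function into $H^d_{\fm}(R)$, yielding the uniqueness claim.

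For the converse, let $\eta=[z/(x_1\cdots x_d)]\in 0^*_{H^d_{\fm}(R)}$ with witness $c\in R^\circ$. For each sufficiently large $e$ the vanishing criterion yields some $k=k_e\ge 0$ with
$(x_1\cdots x_d)^k\, cz^{p^e}\in(x_1^{k+p^e},\ldots,x_d^{k+p^e}).$
At this point I invoke the central input, available precisely because $R$ is excellent and equidimensional, namely the Hochster--Huneke theorem that the limit closure of a parameter ideal is contained in its tight closure. Concretely, iterated colon-capturing gives the inclusion
$(x_1^{k+p^e},\ldots,x_d^{k+p^e}):_R(x_1\cdots x_d)^k\ \subseteq\ (x_1^{p^e},\ldots,x_d^{p^e})^*,$
so $cz^{p^e}\in(x_1^{p^e},\ldots,x_d^{p^e})^*$ for all $e\gg 0$. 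Finally, the excellent-equidimensional hypothesis also guarantees a parameter test element $c^*\in R^\circ$, which by definition multiplies every parameter-ideal tight closure back into the ideal itself; then $c^*cz^{p^e}\in(x_1^{p^e},\ldots,x_d^{p^e})$ for all $e\gg 0$. Because $R^\circ$ is multiplicatively closed, $c^*c\in R^\circ$ witnesses $z\in(x_1,\ldots,x_d)^*$.

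The main technical obstacle is the colon-capturing/limit-closure step in the converse: it is the one place where the excellent-equidimensional hypothesis is actually used, and it is the substantive content of the theorem. The parameter-test-element invocation at the end is then routine, merely converting a tight-closure containment for $cz^{p^e}$ into an honest ideal containment with a single fixed multiplier, as required by the definition of $(x_1,\ldots,x_d)^*$.
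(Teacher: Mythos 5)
The paper itself offers no proof of this statement---it is quoted directly from Smith \cite[Proposition 3.3(i)]{Sm94}---so the comparison can only be against the standard argument, which is essentially what you have reproduced. Your forward direction and the well-definedness of $\eta$ are indeed formal consequences of the direct-limit description of $H^d_{\fm}(R)$ and are correct as written, and your converse correctly identifies colon capturing (the containment of the limit closure of a parameter ideal in its tight closure) as the substantive input, which is exactly where excellence (giving that $R$ is a homomorphic image of a Cohen--Macaulay ring) and equidimensionality are used. The one step that does not hold up under the stated hypotheses is the final appeal to a parameter test element: the existence results quoted in this paper, and the standard Hochster--Huneke theorems, require $R$ to be \emph{reduced} and excellent, whereas the theorem assumes only excellent and equidimensional. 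In a non-reduced equidimensional ring such as $k[[x,y]]/(x^2)$ no parameter test element exists at all: any such $c$ must multiply the nilradical (which lies in $I^*$ for every $I$) into every parameter ideal, hence into $\bigcap_n (y^n)=0$, forcing $c$ into the unique minimal prime and out of $R^\circ$. The standard repair---and what the proof of colon capturing actually delivers---is a \emph{uniform} multiplier: a single $c'\in R^\circ$, depending only on a presentation of $R$ as a quotient of a Cohen--Macaulay ring, satisfying $c'\bigl((x_1^{t+k},\ldots,x_d^{t+k}):_R(x_1\cdots x_d)^k\bigr)\subseteq(x_1^{t},\ldots,x_d^{t})$ for all $t,k\ge 0$. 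Applying this with $t=p^e$ and $k=k_e$ gives $c'cz^{p^e}\in(x_1^{p^e},\ldots,x_d^{p^e})$ directly, bypassing the detour through $\bigl((x_1^{p^e},\ldots,x_d^{p^e})\bigr)^*$ and the test-element existence issue entirely. With that substitution your argument is complete; alternatively, your version is fine verbatim if one adds the hypothesis that $R$ is reduced, which is the only setting in which the present paper invokes the result.
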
 
\begin{remark}\label{rem:tytcl=0iffRisFrat}
Note that if $R$ is Cohen-Macaulay, $\eta=\left[\frac{z}{x_1x_2\cdots x_d} \right]\in 0^*_{H^d_{\fm}(R)}$ and $\eta=0$  if and only if {$z\in (x_1,\ldots ,x_d)$}. Therefore Theorem \ref{Smthm:0*_Frat} implies that an excellent Cohen-Macaulay local ring $(R,\fm)$ of dimension $d$ is F-rational if and only if $0^*_{H^d_{\fm}(R)}=0.$  
\end{remark}
\subsection{Excellent Rings}
Very often, results in this paper and many results for tight closure assume that the given local ring is excellent. We shall use the following properties of  excellent rings frequently.\\
{\rm (1)} Let $(R,\fm)$ be an excellent local ring with $\fm$-adic completion $\hat{R}$ and $I$ be an $\fm$-primary ideal. Then $I^*\hat{R}=(I\hat{R})^*$ \cite[Proposition 10.3.18]{BH}.\\
{\rm (2)} Any excellent reduced local ring is analytically unramified \cite[Theorem 70]{Mat}.\\ 
{\rm (3)} Test elements exist in  reduced excellent local rings \cite[Theorem 6.1 (a)]{HH2}.\\  
{\rm (4)} If $R$ is excellent then it is a homomorphic image of Cohen-Macaulay ring \cite[Corollary 1.2]{Kaw2002}.}

\section{The tight Hilbert function and F-rationality of $R$}\label{sec:F-rat}
In this section, we give a generalization  of Goto-Nakamura results [Theorem \ref{gn}] for equidimensional excellent local rings. We provide a lower bound for tight Hilbert function and show that when the lower bound is achieved then the ring is F-rational under some additional conditions  on $R.$ Let us first prove a crucial lemma required for this purpose. Lemma \ref{tight intersec} follows from \cite[Theorem 8.20]{HH94}. However, we are giving a simpler proof of Lemma \ref{tight intersec}(b). We thank the referee or giving us a clear proof of the next lemma.
\begin{lemma}\label{tight intersec}
Let $(R,\fm)$ be an equidimensional excellent local ring of prime characteristic $p$ and $Q$ be an $\fm$-primary parameter ideal.\\
{\rm (a)} Then for all $n \ge 0$ we have $Q^n \cap (Q^{n+1})^* = Q^n Q^*$.\\
{\rm (b)} $Q^n/Q^n Q^*$ is a free $R/Q^*$-module of rank $\binom{n+d-1}{d-1}$, where $d = \dim R$.
\end{lemma}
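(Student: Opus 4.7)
For part (a), the plan is to handle the two inclusions separately. The containment $Q^n Q^* \subseteq Q^n \cap (Q^{n+1})^*$ I would verify directly: certainly $Q^n Q^* \subseteq Q^n$, and for $z \in Q^n$ and $w \in Q^*$ witnessed by $c \in R^\circ$ satisfying $cw^{p^e} \in Q^{[p^e]}$ for all $e \gg 0$, one has $z^{p^e} \in (Q^n)^{[p^e]}$ by the Frobenius, so
$$c(zw)^{p^e} \;=\; z^{p^e}\cdot cw^{p^e} \;\in\; (Q^n)^{[p^e]}\cdot Q^{[p^e]} \;=\; (Q^{n+1})^{[p^e]};$$
hence $zw \in (Q^{n+1})^*$. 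The reverse containment $Q^n \cap (Q^{n+1})^* \subseteq Q^n Q^*$ is the genuine content of the statement, and I would invoke \cite[Theorem~8.20]{HH94} directly, which handles powers of a parameter ideal in the equidimensional excellent setting.

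For part (b), the plan is to construct the candidate isomorphism explicitly. Let $N = \binom{n+d-1}{d-1}$ and enumerate the monomials $x^\alpha = x_1^{a_1}\cdots x_d^{a_d}$ of total degree $n$. Define
$$\phi : (R/Q^*)^{N} \longrightarrow Q^n / Q^n Q^*, \qquad (\bar{r}_\alpha)_{|\alpha|=n} \longmapsto \overline{\textstyle\sum_\alpha r_\alpha x^\alpha}.$$
Well-definedness is clear since $Q^*\cdot Q^n \subseteq Q^n Q^*$, and surjectivity is immediate because $Q^n$ is generated as an $R$-module by the monomials $\{x^\alpha\}_{|\alpha|=n}$. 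All the weight lies on injectivity.

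For injectivity I would assume $\sum_\alpha r_\alpha x^\alpha \in Q^n Q^*$ and write $\sum_\alpha r_\alpha x^\alpha = \sum_j y_j z_j$ with $y_j \in Q^n$ and $z_j \in Q^*$. Expanding each $y_j = \sum_\alpha s_{j\alpha} x^\alpha$ and collecting terms gives $\sum_\alpha (r_\alpha - t_\alpha) x^\alpha = 0$ in $R$, where $t_\alpha := \sum_j s_{j\alpha} z_j \in Q^*$. Thus injectivity reduces to the following syzygy statement: every relation $\sum_\alpha w_\alpha x^\alpha = 0$ on the degree-$n$ monomials has all coefficients $w_\alpha \in Q^*$. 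For $n=1$ this is the classical colon-capturing inclusion $(x_j : j \neq i)^* : x_i \subseteq (x_j : j \neq i)^* \subseteq Q^*$, applied once for each index $i$. For higher $n$ one fixes a target exponent $\beta$, rewrites the relation so that the complementary monomials lie in a strictly smaller parameter ideal of the form $(x_{i_1},\ldots,x_{i_k})$, and applies colon capturing iteratively to force $w_\beta \in Q^*$.

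The main obstacle is precisely this iterated colon-capturing step for $n\geq 2$. The combinatorial bookkeeping of which parameter to cancel against which exponent has to be arranged so that the equidimensionality and colon-capturing hypotheses are preserved at every stage, and that the residual terms at each step land in an ideal whose tight-closure colon with the chosen monomial is still contained in $Q^*$. This is where the referee's streamlined argument presumably replaces what would otherwise be a cumbersome induction on the exponent vector $\beta$.
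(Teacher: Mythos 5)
Part (a) is fine: the inclusion $Q^nQ^*\subseteq Q^n\cap(Q^{n+1})^*$ is correctly verified, and the reverse inclusion is cited to \cite[Theorem 8.20]{HH94}, exactly as the paper does. For part (b), your setup (the map $\phi$, surjectivity from the monomial generators, and the reduction of injectivity to showing that the coefficients of any element of $Q^nQ^*$ written on the degree-$n$ monomials lie in $Q^*$) agrees with the paper's. But the decisive step is missing. You reduce everything to the claim that $w_\beta\in Q^*$ whenever $w_\beta x^\beta$ lies in the ideal generated by the other degree-$n$ monomials, and then say this follows from ``iterated colon capturing'' whose bookkeeping you have not arranged; you explicitly flag this as the obstacle. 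That is a genuine gap, not a routine verification: already for $d=2$, $n=2$, $\beta=(1,1)$ you need $\bigl((x_1^2,x_2^2)R\bigr)^*:x_1x_2\subseteq (x_1,x_2)^*$, which is not a single application of the standard colon-capturing inclusion $(x_1,\dots,x_i)^*:x_{i+1}\subseteq (x_1,\dots,x_i)^*$, and chaining generalized colon-capturing statements for powers of parameters in a way that terminates in $Q^*$ for arbitrary $n$, $d$ and $\beta$ is exactly the content you would have to supply.

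The paper closes this gap in one stroke with the ``generic'' colon-capturing theorem of Aberbach--Huneke--Smith \cite[Theorem 2.3]{AHS95}. One passes to the polynomial subring $A=\mathbb F_p[x_1,\dots,x_d]$ with $q=(x_1,\dots,x_d)A$, notes $u_im_i\in (Q^{n+1})^*+J_iR\subseteq ((q^{n+1}+J_i)R)^*$ where $J_i$ is generated by the other monomials, and then applies
$((q^{n+1}+J_i)R)^*:_R m_i\subseteq \bigl(((q^{n+1}+J_i):_A m_i)R\bigr)^*$.
The colon on the right is now an elementary monomial computation in a genuine polynomial ring, and it is contained in $q$, giving $u_i\in (qR)^*=Q^*$ with no induction on exponent vectors. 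So: right reduction, right general theme (colon capturing), but the one nontrivial step of the lemma is asserted rather than proved; to complete your argument you need \cite[Theorem 2.3]{AHS95} or an equivalent substitute for it.
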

\begin{proof}
{\rm (b)}	 We note that $Q^n$ is a $R$-module generated by monomials of degree $n$ in $x_1, \ldots, x_d$ which form minimal generators of $Q^n$ since $x_1,\ldots ,x_d$ are analytically independent \normalfont{\cite[Theorem 5]{NR}}.
	 Let $A = \mathbb F_p[x_1, \ldots, x_d]$ be the polynomial subring of $R$ generated by $x_1, \ldots, x_d$. Set $q = (x_1, \ldots, x_d)A$.  Let $m_1, \ldots, m_t$ be monomials in the $x_i$ of degree $n$ that form a minimal generating set of the finite $R/Q^*$-module $Q^n / Q^n Q^*$ (since any monomial of greater degree will sit in $Q^{n+1} \subseteq Q^n Q^*$). Suppose we have $u_i \in R$ such that $z=\sum_{i=1}^t u_i m_i \in Q^n Q^*$. To show that the $R/Q^*$-module $Q^n / Q^n Q^*$ is free, we must show that each $u_i \in Q^*$. For each $1\leq i \leq t$, set $J_i := (m_1, \ldots, \widehat{m_i}, \ldots, m_t)A$. Then since $Q^n Q^* \subseteq (Q^{n+1})^*$, we have $u_i m_i \in (Q^{n+1})^* + J_i R = (q^{n+1}R)^* + J_i R \subseteq ((q^{n+1} + J_i)R)^*$. Thus, $u_i \in ((q^{n+1}+J_i)R)^* :_R m_i \subseteq
(((q^{n+1}+J_i):_A m_i)R)^*$ by \cite[Theorem 2.3]{AHS95}. But it is easy to see in the polynomial ring $A$ that $(q^{n+1}+J_i):_A m_i \subseteq q$. Thus, $u_i \in (q R)^* = Q^*$.   
\end{proof}
\begin{theorem}\label{thm:ineq_tytHil_multipli} Let $(R,\fm)$ be an equidimensional excellent local ring of prime characteristic $p$ and $Q$ be an ideal generated by a system of parameters for $R$. Then for all $n\ge 0,$
$$\ell (R/(Q^{n+1})^*) \ge \ell(R/Q^*) \binom{n+d}{d}.$$
\end{theorem}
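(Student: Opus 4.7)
The plan is to filter $R/(Q^{n+1})^*$ along powers of $Q$ and estimate each successive quotient by means of Lemma \ref{tight intersec}. Set $A_k := Q^k + (Q^{n+1})^*$ for $0 \le k \le n+1$, so that $A_0 = R$, $A_{n+1} = (Q^{n+1})^*$, and the descending chain $A_0 \supseteq A_1 \supseteq \cdots \supseteq A_{n+1}$ yields $\ell(R/(Q^{n+1})^*) = \sum_{k=0}^{n} \ell(A_k/A_{k+1})$. By the second isomorphism theorem together with the modular law (applicable because $Q^{k+1} \subseteq Q^k$), each successive quotient is isomorphic to $Q^k/(Q^{k+1} + Q^k \cap (Q^{n+1})^*)$.

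The crucial step is showing that $Q^k \cap (Q^{n+1})^* \subseteq Q^k Q^*$ for every $0 \le k \le n$. Since $Q^{n+1} \subseteq Q^{k+1}$ whenever $k \le n$, monotonicity of tight closure gives $(Q^{n+1})^* \subseteq (Q^{k+1})^*$, and hence $Q^k \cap (Q^{n+1})^* \subseteq Q^k \cap (Q^{k+1})^*$. Lemma \ref{tight intersec}(a) applied at exponent $k$ identifies the right-hand side with $Q^k Q^*$. Combined with the obvious inclusion $Q^{k+1} = Q \cdot Q^k \subseteq Q^* \cdot Q^k$, this forces $Q^{k+1} + Q^k \cap (Q^{n+1})^* \subseteq Q^k Q^*$, producing a natural surjection $Q^k/(Q^{k+1} + Q^k \cap (Q^{n+1})^*) \twoheadrightarrow Q^k/Q^k Q^*$.

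Finally, Lemma \ref{tight intersec}(b) gives $\ell(Q^k/Q^k Q^*) = \ell(R/Q^*)\binom{k+d-1}{d-1}$, and the surjection above yields $\ell(A_k/A_{k+1}) \ge \ell(R/Q^*)\binom{k+d-1}{d-1}$. Summing over $k = 0, 1, \ldots, n$ and applying the hockey-stick identity $\sum_{k=0}^{n}\binom{k+d-1}{d-1} = \binom{n+d}{d}$ yields the claimed lower bound. The argument is essentially bookkeeping once Lemma \ref{tight intersec} is in hand; the only delicate point is verifying the denominator containment, and that hinges on invoking part (a) of the lemma at exponent $k$ rather than at $n$, together with the monotonicity of $I \mapsto I^*$.
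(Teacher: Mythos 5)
Your proof is correct and follows essentially the same strategy as the paper's: telescope $\ell(R/(Q^{n+1})^*)$ over a filtration indexed by powers of $Q$, bound each successive quotient below by $\ell(Q^k/Q^kQ^*)$ via Lemma \ref{tight intersec}, and sum using the hockey-stick identity. The only cosmetic difference is that you filter by $Q^k+(Q^{n+1})^*$ whereas the paper filters by $(Q^k)^*$ and pushes $Q^k$ into each quotient; both arguments reduce to the same application of parts (a) and (b) of the lemma at exponent $k$.
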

\begin{proof} We have 
$$\ell (R/(Q^{n+1})^*) = \sum_{k=0}^{n} \ell ((Q^k)^*/(Q^{k+1})^*).$$
For each $k$ we have
$$\ell \left(\frac{(Q^k)^*}{(Q^{k+1})^*} \right) \ge \ell \left(\frac{Q^k +(Q^{k+1})^*}{(Q^{k+1})^*}\right) = \ell \left(\frac{Q^k }{Q^k \cap (Q^{k+1})^*} \right) = \ell \left(\frac{Q^k }{Q^k Q^*} \right).$$
Since $Q^k$ is minimally generated over $R$ by ${k+d-1} \choose {d-1}$ generators, the base-changed module $Q^k / (Q^k Q^*)$ is also generated over $R/Q^*$ by ${k+d-1} \choose {d-1}$ generators.  As it must be free on these generators by Lemma \ref{tight intersec}, 
\[
    \ell((Q^k)^* / (Q^{k+1})^*) \geq \ell (Q^k / Q^k Q^*) = \ell(R/Q^*) {{k+d-1} \choose {d-1}}.
    \] 
Therefore 
$$\ell (R/(Q^{n+1})^*) \ge \ell(R/Q^*) \sum_{k=0}^{n} \binom{k+d-1}{d-1} = \ell(R/Q^*) \binom{n+d}{d}.$$
The proof is complete.
\end{proof}
\begin{corollary} \label{cor:conj}
Let $(R,\fm)$ be a reduced equidimensional excellent local ring of prime characteristic $p$ and $Q$ be an ideal generated by a system of parameters for $R$. Then $$e_0(Q)\geq \ell(R/Q^*).$$
\end{corollary}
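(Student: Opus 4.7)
The plan is to derive the inequality by dividing both sides of the inequality in Theorem \ref{thm:ineq_tytHil_multipli} by $\binom{n+d}{d}$ and taking $n \to \infty$. To legitimize the limit on the left-hand side, the first step will be to confirm that the tight Hilbert function $H^*_Q(n) = \ell(R/(Q^n)^*)$ is eventually a polynomial of degree $d$ whose leading coefficient is $e_0(Q)/d!$. Since $R$ is reduced and excellent, property (2) recorded in Section \ref{sec:prel} gives that $\hat R$ is reduced, i.e.\ $R$ is analytically unramified, so Rees's theorem applies and $H^*_Q(n)$ agrees with $P^*_Q(n)$ for $n \gg 0$.

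Next, I would use the sandwich $Q^n \subseteq (Q^n)^* \subseteq \overline{Q^n}$ to conclude
$$\ell(R/\overline{Q^n}) \le \ell(R/(Q^n)^*) \le \ell(R/Q^n).$$
The outer Hilbert functions coincide with polynomials of degree $d$ having the same leading term $e_0(Q)\binom{n+d-1}{d}$, so the middle polynomial $P^*_Q(n)$ must have the same leading term, forcing $e_0^*(Q) = e_0(Q)$. Consequently,
$$\lim_{n\to\infty}\frac{\ell(R/(Q^{n+1})^*)}{\binom{n+d}{d}} \;=\; e_0^*(Q) \;=\; e_0(Q).$$

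Finally, Theorem \ref{thm:ineq_tytHil_multipli} provides
$$\frac{\ell(R/(Q^{n+1})^*)}{\binom{n+d}{d}} \;\ge\; \ell(R/Q^*)$$
for every $n \ge 0$, and passing to the limit yields $e_0(Q) \ge \ell(R/Q^*)$. No genuine obstacle is anticipated: the only point requiring care is the application of Rees's theorem, which is precisely where the reducedness hypothesis (not needed in Theorem \ref{thm:ineq_tytHil_multipli} itself) is invoked to guarantee analytic unramifiedness.
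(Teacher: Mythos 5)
Your proposal is correct and follows essentially the same route as the paper: both invoke Rees's theorem (via analytic unramifiedness of the reduced excellent ring) to know $\ell(R/(Q^{n+1})^*)$ is eventually the polynomial $P^*_Q(n+1)$ with leading coefficient $e_0^*(Q)=e_0(Q)$, and then compare leading terms against the lower bound $\ell(R/Q^*)\binom{n+d}{d}$ from Theorem \ref{thm:ineq_tytHil_multipli}. The only cosmetic difference is that you re-derive $e_0^*(Q)=e_0(Q)$ from the sandwich $Q^n\subseteq (Q^n)^*\subseteq\overline{Q^n}$, whereas the paper records this fact in the introduction and simply reads off the coefficient of $\binom{n+d}{d}$ in the polynomial inequality.
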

\begin{proof}
Since $R$ is analytically unramified, by using Theorem \ref{thm:ineq_tytHil_multipli} for $n>>0$ we have,
$$\left[ e_0(Q)-\ell(R/Q^*) \right] \binom{n+d}{d}-e_1^*(Q)\binom{n+d-1}{d-1}+\cdots +(-1)^de_d^*(Q)\geq 0.$$ Therefore $e_0(Q)\geq \ell(R/Q^*).$
\end{proof}
The following lemma provides  equivalent conditions for F-rationality of Cohen-Macaulay rings. 
\begin{lemma}\label{lemma:TFAEtytcl}
 Let $(R,\fm)$ be a Cohen-Macaulay local ring of prime characteristic $p.$ Let $Q$ be an ideal of $R$ generated by a system of parameters.
Then the following are equivalent.\\
{\rm (a) } $Q^*=Q,$\\
{\rm (b) } $(Q^n)^*=Q^n$ for all $n\geq 1.$\\
{\rm (c) } $(Q^n)^*=Q^n$ for some $n\geq 1.$\\
\end{lemma}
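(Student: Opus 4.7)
The plan is to prove the cyclic implications (a) $\Rightarrow$ (b) $\Rightarrow$ (c) $\Rightarrow$ (a), with the middle implication being immediate. The essential work lies in the first and third.

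For (a) $\Rightarrow$ (b), I would induct on $n$. The base case $n=1$ is precisely (a). For the inductive step, assume $(Q^n)^* = Q^n$. Since tight closure is monotone with respect to inclusion and $Q^{n+1} \subseteq Q^n$, we have $(Q^{n+1})^* \subseteq (Q^n)^* = Q^n$. Lemma \ref{tight intersec}(a), combined with the hypothesis $Q^* = Q$, then yields
\[
(Q^{n+1})^* = Q^n \cap (Q^{n+1})^* = Q^n Q^* = Q^n Q = Q^{n+1},
\]
completing the induction.

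For (c) $\Rightarrow$ (a), the strategy is a one-step descent: I will show that if $(Q^n)^* = Q^n$ for some $n \ge 2$, then $(Q^{n-1})^* = Q^{n-1}$; iterating from the given $n$ down to $1$ then gives $Q^* = Q$. Let $y \in (Q^{n-1})^*$. Using the standard property $I^* \cdot J \subseteq (IJ)^*$ (easily verified by taking $q$-th powers in the definition), we obtain
\[
y \cdot x_i \in (Q^{n-1})^* \cdot Q \subseteq (Q^n)^* = Q^n \quad \text{for each } i,
\]
so $y \in Q^n :_R x_i$. The crucial colon computation is that $Q^n :_R x_i = Q^{n-1}$ in the Cohen--Macaulay setting. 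This follows because $x_1, \ldots, x_d$ is a regular sequence, so the associated graded ring $\mathrm{gr}_Q(R)$ is isomorphic to the polynomial ring $(R/Q)[X_1, \ldots, X_d]$ (a classical Valla-type theorem). If $y$ had $Q$-adic order $k \le n-2$, its initial form $y^* \in Q^k/Q^{k+1}$ would be nonzero, while the condition $x_i y \in Q^n \subseteq Q^{k+2}$ forces $x_i^* y^* = 0$ in $Q^{k+1}/Q^{k+2}$; since $x_i^*$ corresponds to the indeterminate $X_i$, a nonzerodivisor on the polynomial ring, this gives $y^* = 0$, a contradiction. Hence $y \in Q^{n-1}$.

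The only potential obstacle is the colon identity $Q^n :_R x_i = Q^{n-1}$; once the regular sequence hypothesis is in force this is standard, and the rest of the argument is formal manipulation with tight closure together with Lemma \ref{tight intersec}(a). No appeal to local cohomology or to Theorem \ref{Smthm:0*_Frat} is needed, which is natural because the statement is purely about one parameter ideal and does not assert F-rationality of the whole ring (although of course (a)--(c) together with the Cohen--Macaulay hypothesis is equivalent to F-rationality via Remark \ref{rem:tytcl=0iffRisFrat}).
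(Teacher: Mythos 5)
Your proof is correct. The implication (a) $\Rightarrow$ (b) is essentially the paper's own argument: the same induction resting on the identity $Q^n \cap (Q^{n+1})^* = Q^nQ^*$ (the paper cites \cite[Proposition 4.2]{GMV2019} here rather than Lemma \ref{tight intersec}(a); note that the stated hypotheses of Lemma \ref{tight intersec} include excellence, whereas the present lemma assumes only that $R$ is Cohen--Macaulay, so the paper's citation is the safer one). Where you genuinely diverge is (c) $\Rightarrow$ (a). The paper applies the intersection identity once more, writing $Q^n = Q^{n-1}\cap (Q^n)^* = Q^*Q^{n-1}$ and concluding $Q^* \subseteq Q^n :_R Q^{n-1} = Q$ in one step. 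You instead run a one-step descent from $(Q^n)^*=Q^n$ to $(Q^{n-1})^*=Q^{n-1}$ using only the elementary containment $I^*J\subseteq (IJ)^*$ together with the colon identity $Q^n :_R x_i = Q^{n-1}$, which you justify correctly via $\gr_Q(R)\cong (R/Q)[X_1,\dots,X_d]$ for a regular sequence. Your route is a bit longer but avoids a second appeal to the nontrivial intersection lemma for this direction; the paper's is shorter but leans on that lemma twice. Both arguments ultimately hinge on the same Cohen--Macaulay input, namely that $x_1,\dots,x_d$ is a regular sequence, which is what makes the relevant colon ideals ($Q^n :_R x_i$ for you, $Q^n :_R Q^{n-1}$ for the paper) collapse as claimed.
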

\begin{proof}
(a) $ \implies $ (b). Observe that, using \cite[Proposition 4.2]{GMV2019}, $Q^n \cap (Q^{n+1})^*=Q^*Q^n$ for all $n\geq 1.$ Let $Q^*=Q.$ Apply induction on $n.$ The $n=1$ case is an assumption. Suppose that $(Q^n)^*=Q^n$  for $n=1, 2, \ldots, r.$ 
As $(Q^{r+1})^* \subset (Q^r)^*=Q^r,$ we have 
$$(Q^{r+1})^* = (Q^{r+1})^* \cap Q^r=Q^*Q^r=Q^{r+1}.$$
By induction $(Q^n)^*=Q^n$ for all $n\geq 1.$\\
(b) $ \implies $ (c). This is clear.\\
(c) $ \implies $ (a). Let  $(Q^n)^*=Q^n$ for some $n\geq 1.$ Therefore
$ Q^n = Q^{n-1}\cap (Q^n)^*=Q^*Q^{n-1}.$ Hence $Q^*\subseteq Q^{n}: Q^{n-1}=Q.$ Therefore $Q^*=Q.$
\end{proof}

\begin{theorem} Let $(R, \fm)$ be a Noetherian local ring of dimension $d$ and prime characteristic $p.$ Let $(S,n)$ be a Cohen-Macaulay local ring of dimension $d$ and $Q(R)$ be the total quotient ring of $R$ such that $R\subseteq S \subseteq Q(R)$ and $S$ is a finite $R$-module. Let $Q$ be an ideal of $R$ generated by a system of parameters. Suppose that for some fixed $n\geq 0$, $$\ell(R/(Q^{n+1})^*)=e_0(Q)\binom{n+d}{d}.$$
Then $R=S.$ In particular $R$ is F-rational.
\end{theorem}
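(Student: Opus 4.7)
The strategy is to sandwich $\ell_R(R/(Q^{n+1})^*)$ between two quantities both equal to $e_0(Q)\binom{n+d}{d}$, forcing all inequalities to equalities, and then extract $R=S$ from the resulting module isomorphism via Nakayama. Concretely, I would establish the chain
\begin{equation*}
e_0(Q)\binom{n+d}{d} = \ell_R(R/(Q^{n+1})^*) \;\le\; \ell_R\!\left(R/(Q^{n+1}S\cap R)\right) \;\le\; \ell_R(S/Q^{n+1}S) \;=\; e_0(Q)\binom{n+d}{d}.
\end{equation*}

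The leftmost inequality reduces to showing $Q^{n+1}S \cap R \subseteq (Q^{n+1})^*$. Since $S$ is a finitely generated $R$-submodule of $Q(R)$, each $R$-generator of $S$ can be cleared to $R$ by a non-zerodivisor, and the product of these gives a single element $c \in R^\circ$ with $cS \subseteq R$. For $x \in Q^{n+1}S$, applying the $e$-th iterated Frobenius on $S$ gives $x^q \in (Q^{n+1})^{[q]} S$, whence $cx^q \in (Q^{n+1})^{[q]} \cdot cS \subseteq (Q^{n+1})^{[q]}$ for every $q=p^e$, placing $x$ in the tight closure. The middle inequality is the obvious injection $R/(Q^{n+1}S\cap R) \hookrightarrow S/Q^{n+1}S$.

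For the rightmost equality, I would first observe that $R$ is equidimensional: any minimal prime $\fp$ of $R$ is contracted from a minimal prime $\fq_0$ of the equidimensional Cohen--Macaulay ring $S$, and the finite extension $R/\fp \hookrightarrow S/\fq_0$ preserves quotient dimension. Since $S$ is Cohen--Macaulay and finite over $R$, the generators of $Q$ form an $S$-regular sequence, so $\gr_{QS}(S) \cong (S/QS)[T_1,\ldots,T_d]$, yielding $\ell_R(S/Q^{n+1}S) = \ell_R(S/QS)\binom{n+d}{d}$ and identifying $\ell_R(S/QS)$ with the multiplicity $e_0(Q;S)$ of the maximal Cohen--Macaulay $R$-module $S$. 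To replace $e_0(Q;S)$ with $e_0(Q)$, observe that $S/R$ is $R$-torsion (every element of $S\subseteq Q(R)$ is killed modulo $R$ by a non-zerodivisor), hence unsupported at every minimal prime of $R$, so $\dim_R(S/R)<d$; additivity of multiplicity along $0\to R\to S\to S/R\to 0$ then gives $e_0(Q;S)=e_0(Q;R)+e_0(Q;S/R)=e_0(Q)$.

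Once the chain is forced to equalities, the injection $R/(Q^{n+1}S\cap R) \hookrightarrow S/Q^{n+1}S$ must be onto, so $S=R+Q^{n+1}S$; reducing modulo $R$ yields $S/R = Q^{n+1}(S/R)$, and since $S/R$ is a finite $R$-module with $Q^{n+1}\subseteq\fm$, Nakayama's lemma forces $S=R$. Then $R$ is Cohen--Macaulay with $(Q^{n+1})^* = Q^{n+1}S\cap R = Q^{n+1}$, so Lemma \ref{lemma:TFAEtytcl} gives $Q^*=Q$, and the standard fact that a Cohen--Macaulay local ring with one tightly closed parameter ideal is F-rational completes the proof. The main obstacle will be the rightmost equality in the chain: it requires simultaneously leveraging the Cohen--Macaulay structure on $S$ (to obtain an exact, not merely asymptotic, length formula), the birationality $S\subseteq Q(R)$ (to pass from $e_0(Q;S)$ to $e_0(Q)$), and the non-obvious equidimensionality of $R$, which must be derived rather than assumed.
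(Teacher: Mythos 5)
Your proposal is correct and follows essentially the same route as the paper: sandwiching $\ell_R(R/(Q^{n+1})^*)$ between $e_0(Q)\binom{n+d}{d}$ via the chain through $S/Q^{n+1}S$, forcing equality, and concluding $S=R$ by Nakayama applied to $S/R$. The only cosmetic differences are that you prove the contraction $Q^{n+1}S\cap R\subseteq (Q^{n+1})^*$ directly with a conductor element (where the paper cites the contraction property of tight closure under module-finite extensions) and compute $\ell_R(S/Q^{n+1}S)$ via additivity of multiplicity rather than the residue-degree formula $f\,e_0(QS)=e_0(Q)$; both are equivalent to the paper's steps.
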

\begin{proof}
Using \cite[Proposition 10.1.5]{BH} we get $(Q^nS)^*\cap R \subseteq (Q^n)^*.$ Let $f=[S/\fn: R/\fm]. $ Then we obtain the following

\begin{equation} \label{dqv_eqnTHF1}
\ell_R(R/(Q^{n+1})^*)  \leq \ell_R(R/(Q^nS)^*\cap R) \leq  \ell_R(S/(Q^{n+1}S)^*)  
 \leq\ell_R(S/Q^{n+1}S),
 \end{equation}
 
 \begin{equation}\label{dqv_eqnTHF2}
\ell_R(S/Q^{n+1}S) =f \ell_S(S/(Q^{n+1}S))=fe_0(QS)\binom{n+d}{d}=e_0(Q)\binom{n+d}{d}.
\end{equation}

Therefore, if $\ell(R/(Q^{n+1})^*)=e_0(Q)\binom{n+d}{d},$ then $(Q^{n+1}S)^*=(Q^{n+1}S).$ As $S$ is Cohen-Macaulay, using Lemma \ref{lemma:TFAEtytcl} it follows that $(QS)^*=QS$ and therefore $S$ is F-rational. Now consider the exact sequence of finite $R$-modules
$$0 \to R\to S \to C\to 0,$$
where $C=S/R.$ From \eqref{dqv_eqnTHF1} and \eqref{dqv_eqnTHF2}, it follows that $(Q^{n+1})^*=(Q^{n+1}S)^*\cap R=Q^{n+1}S\cap R.$ Tensor this sequence with $R/Q^{n+1}$ to get the exact sequence of $R$-modules
$$0 \to R/(Q^{n+1})^*\to S/Q^{n+1}S\to C/Q^{n+1}C\to 0.$$ As $\ell(R/(Q^{n+1})^*)=e_0(Q)\binom{n+d}{d},$ using \eqref{dqv_eqnTHF1} and \eqref{dqv_eqnTHF2}, we get $ \ell_R( R/(Q^{n+1})^*)=\ell_R(S/Q^{n+1}S)$ which yields $C=Q^{n+1}C.$ By Nakayama's lemma, $C=0.$ This means $R=S.$ In particular $R$ is F-rational.
\end{proof}
We discuss a  relationship of $e_1^*(Q)$ with $S_2$-ification. Let $(R, \fm, k)$ be a Noetherian local ring of dimension $d.$
We recall a few facts about $S_2$-ification of $R$ from \cite{HH1994}.
\begin{definitions}
{\rm (1)} We say that $R$ is \textit{equidimensional} if $\dim R/\fp=d$ for all minimal primes $\fp$ of $R.$ If $R$ is equidimensional and it  has no embedded associated primes, then  $R$ is called \textit{unmixed}. 

{\rm (2)} Let $(R, \frak m)$ be an equidimensional local ring of dimension $d$. The \textit{Hochster-Huneke graph} $\mathcal{G}(R)$ is a graph where the vertices are the minimal prime ideals of $R$ and the edges are the pairs of prime ideals $(P_1, P_2)$ with $\mathrm{ht}(P_1 + P_2) = 1$.

{\rm (3)} Let $(R,\fm,k)$ be an equidimensional and unmixed local ring. We say that a ring $S$ is an \textit{$S_2$-ification} of $R$ if \\
{\rm (i)} $S$ lies between $R$ and its total quotient ring,\\
{\rm (ii)} $S$ is module-finite over $R$ and is $S_2$ as an $R$-module, and \\
{\rm (iii)} for every element $s\in S\setminus R,$ the ideal $D(s):=\{r\in R:rs\in R\}$ has height at least two.
\end{definitions}
If $R$ is $S_2$ then $\mathcal{G}(R)$ is connected. Moreover $\mathcal{G}(R)$ is connected if and only if the $S_2$-ification of $R$ is local \cite[Theorem 3.6]{HH1994}.
\begin{corollary}
Let $(R, \frak m)$ be a Noetherian local ring with $\dim (R/\fp)=2$ for all $\fp\in \Ass R$ of prime characteristic $p$ such that $\mathcal{G}(R)$ is connected. If for an ideal $Q$ generated by a system of parameters for $R$ and for some $n\ge 0,$ 
$$\ell (R/(Q^{n+1})^*) =e_0(Q) \binom{n+2}{2}$$
then $R$ is F-rational.
\end{corollary}
\begin{proof} By the result above, the $S_2$-ification $S$ of $R$ is a Cohen-Macaulay local ring that is a finite $R$-module.
\end{proof}

\section{On the equality $e_1^*(Q)=e_1(Q)$ and F-rational local rings}\label{sec:on equality Buchs} 
In \cite{MTV1990} M. Moral\'es,  N. V. Trung and O. Villamayor proved the following characterization of regular local rings.
\begin{theorem}\cite[Theorem 1,2]{MTV1990} Let $(R,\fm)$ be an analytically unramified excellent local domain and $I$ be an $\fm$-primary parameter ideal. If $\overline{e}_1(I) = e_1(I)$ then $R$ is a regular and $\overline{I^n}=I^n$ for all $n.$
\end{theorem}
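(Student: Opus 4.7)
The plan is to exploit the inequality $\overline{e}_1(I) \ge e_1(I)$ (forced by $I^n \subseteq \overline{I^n}$) and reduce inductively to the one-dimensional case, where the equality forces $R$ to coincide with its integral closure in $\operatorname{Frac}(R)$, hence to be a DVR. The inequality itself is immediate by comparing Hilbert polynomials: $\ell(R/I^n) \ge \ell(R/\overline{I^n})$ for all $n$, and both have leading term $e_0(I)\binom{n+d-1}{d}$, so $\overline{e}_1(I) - e_1(I)$ equals the leading coefficient of $\ell(\overline{I^n}/I^n)$ in $\binom{n+d-2}{d-1}$.

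In the base case $d=1$, write $I=(x)$. Excellence and analytic unramifiedness make the integral closure $R^\nu \subseteq \operatorname{Frac}(R)$ a finitely generated $R$-module, and for $n \gg 0$ one has $\overline{I^n} = x^n R^\nu \cap R$, as seen by dividing an integral equation $r^m + \sum_i a_i r^{m-i} = 0$ with $a_i \in (x^n)^i$ through by $x^{nm}$. Since the conductor $R :_R R^\nu$ is $\fm$-primary, multiplication by $x^{-n}$ identifies $\overline{I^n}/I^n \cong R^\nu/R$ for every sufficiently large $n$. Hence $\overline{e}_1(I) - e_1(I) = \ell_R(R^\nu/R)$, so the equality hypothesis forces $R = R^\nu$. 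Thus $R$ is a DVR---in particular regular---and $\overline{I^n} = I^n$ for every $n$.

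For $d \ge 2$, I would reduce to the one-dimensional case by an inductive superficial-element argument. After harmlessly extending the residue field via $R \to R[X]_{\fm R[X]}$, pick $x_1 \in I$ superficial for both the $I$-adic filtration and the normal filtration $\{\overline{I^n}\}$ (the latter is $I$-good by Rees's theorem, using analytic unramifiedness). Standard superficial formulas yield $e_i(I/(x_1)) = e_i(I)$ for $0 \le i \le d-1$, and---using that $x_1$ is a nonzerodivisor in the domain $R$ and cancels cleanly from both filtrations---analogously $\overline{e}_i(I/(x_1)) = \overline{e}_i(I)$ in the same range. Iterating $d-1$ times reduces to a one-dimensional analytically unramified local ring, which by the base case is a DVR. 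A standard lifting argument (a Noetherian local ring whose quotient by a nonzerodivisor in $\fm \setminus \fm^2$ is regular is itself regular) then yields regularity of $R$, and the chain of equalities $\overline{J^n} = J^n$ for the successive quotient ideals $J$ lifts to $\overline{I^n} = I^n$ in $R$.

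The main obstacle is the descent in the inductive step: passing to $R/(x_1)$ destroys the domain hypothesis, so one must verify that analytic unramifiedness persists (which follows for a generic choice of $x_1$ by a Bertini-type argument in the excellent setting) and that the equality of the second normal coefficient genuinely descends. An alternative global route would compare the Rees algebra $R[It]$ to its integral closure $\overline{R[It]} \subseteq R[t]$ directly, showing that the equality hypothesis forces the Hilbert polynomial of the graded module $\overline{R[It]}/R[It]$ to have degree strictly less than $d-1$ and then inductively arguing vanishing---but this requires the same delicate bookkeeping of graded dimensions.
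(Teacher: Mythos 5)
The paper itself offers no proof of this statement --- it is quoted from Morales--Trung--Villamayor as a known result --- so there is no internal argument to compare against; I will assess your proposal on its own terms. Your one-dimensional base case is correct and complete: $\overline{(x^n)}=x^nR^\nu\cap R$, the conductor is $\fm$-primary, multiplication by $x^{-n}$ gives $\overline{(x^n)}/(x^n)\cong R^\nu/R$ for $n\gg 0$, hence $\overline{e}_1(I)-e_1(I)=\ell_R(R^\nu/R)$ and the equality hypothesis forces $R=R^\nu$ to be a DVR.

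The inductive step, however, contains a genuine gap precisely at the point you label ``the main obstacle,'' and it is not a removable technicality. For a superficial nonzerodivisor $x_1\in I$ one does get $e_i(I/(x_1))=e_i(I)$ for $0\le i\le d-1$, but the parallel claim $\overline{e}_i(I/(x_1))=\overline{e}_i(I)$ is not available: the filtration induced on $R/(x_1)$ by the images of the $\overline{I^n}$ is an $I/(x_1)$-good filtration whose coefficients can (with Itoh-type lemmas such as $\overline{I^{n+1}}\cap(x_1)=x_1\overline{I^n}$) be matched with the $\overline{e}_i(I)$, but it is in general strictly smaller than the normal filtration $\{\overline{(I R/(x_1))^n}\}$ of the quotient ring. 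Integral closure does not commute with specialization, and $R/(x_1)$ need not be reduced or analytically unramified even for a general choice of $x_1$; this failure is exactly why normal Hilbert coefficients resist induction on dimension (it is the same obstruction that makes, e.g., the sign of $\overline{e}_3$ a hard problem). A second, smaller gap: even granting the reduction and the regularity of $R$, the hypothesis only yields $\deg\ell(\overline{I^n}/I^n)\le d-2$; to conclude $\overline{I^n}=I^n$ you must show this polynomial is eventually zero, and your closing ``lifting'' sentence does not supply that step. The route that works is global rather than inductive, and it is the very template that the present paper transplants to tight closure in Lemma \ref{tight intersec} and Theorem \ref{thm:ineq_tytHil_multipli}: one proves $Q^n\cap\overline{Q^{n+1}}=Q^n\overline{Q}$ and that $Q^n/Q^n\overline{Q}$ is free over $R/\overline{Q}$ of rank $\binom{n+d-1}{d-1}$ (using analytic independence of the parameters), deduces $\ell(R/\overline{Q^{n+1}})\ge\ell(R/\overline{Q})\binom{n+d}{d}$, and then analyzes the case of equality of first coefficients directly, with no specialization of integral closures required. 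I would encourage you to rework the argument along those lines or to keep the statement as a citation.
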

In this section, we find explicit formulas for the tight Hilbert coefficients of  an ideal $Q$ generated by system of parameters that are parameter test elements, in terms of the lengths of local cohomology modules $H^j_{\fm}(R)$ for $0\leq j \leq d-1,$ $e_i(Q)$ for $0\leq i \leq d$ and $\ell(0^*_{H^d_{\frak m}(R)}).$ We use these formulas to  characterize F-rationality of the ring in terms of the equality $e_1^*(Q)=e_1(Q)$ and also in terms of vanishing of $e_1^*(Q)$ under the condition that $\depth R\geq 2.$\\
Let $(R,\fm)$ be a local ring of dimension $d$ and $I$ be any $\fm$-primary parameter ideal of $R.$ It is well known that $\ell(R/I)\ge e_0(I).$ Moreover, $R$ is Cohen-Macaulay if and only if $\ell(R/I)=e_0(I)$ for some (and hence for all) $I.$ Recall that $R$ is called \textit{Buchsbaum} if $\ell(R/I)-e_0(I)$ is independent of the choice of $I.$ 
\begin{definition} Let $(R,\fm)$ be a $d$-dimensional Noetherian local ring. An $\fm$-primary parameter ideal $I$ is said to be \textit{standard} if 
$$\ell(R/I)-e_0(I)=\sum_{i=0}^{d-1}\binom{d-1}{i}\ell(H^i_{\fm}(R)).$$  
\end{definition}

The  following result  due to  Linquan Ma and Pham Hung Quy plays a crucial role for proving  a characterization of F-rationality in terms of vanishing of $e_1^*(Q)$ for $\fm$-primary parameter ideals generated by parameter test elements.
\begin{theorem}\cite[Theorem 4.3]{MQ2021} \label{thm:tytcland0*}
Let $(R, \frak m)$ be an excellent equidimensional local ring such that $\tau_{par}(R)$ is $\fm$-primary. Let $Q$ be an ideal generated by a system of parameters contained in $\tau_{par}(R).$ Then we have
$$\ell(Q^*/Q) = \sum_{i=0}^{d-1} \binom{d}{i}\ell(H^i_{\frak m}(R)) + \ell(0^*_{H^d_{\frak m}(R)}).$$
\end{theorem}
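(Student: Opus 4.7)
The plan is to insert the limit closure
$$\langle Q\rangle\;:=\;\bigcup_{n\ge 1}\Bigl((x_1^{n+1},\ldots,x_d^{n+1})\,:\,(x_1\cdots x_d)^n\Bigr)$$
between $Q$ and $Q^*$ and to compute $\ell(Q^*/Q)$ as the sum $\ell(\langle Q\rangle/Q)+\ell(Q^*/\langle Q\rangle)$. In an equidimensional local ring one has $Q\subseteq\langle Q\rangle\subseteq Q^*$, and the canonical map
$$\phi\colon R/Q\longrightarrow H^d_\fm(R),\qquad r+Q\mapsto\bigl[r/(x_1\cdots x_d)\bigr],$$
has kernel exactly $\langle Q\rangle/Q$. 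The task then reduces to identifying the two summands with the two terms on the right-hand side of the theorem.

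For the upper summand, $\phi$ restricts to an injection $Q^*/\langle Q\rangle\hookrightarrow 0^*_{H^d_\fm(R)}$: any $z\in Q^*$ satisfies $cz^{p^e}\in Q^{[p^e]}$ for some $c\in R^\circ$, which translates directly to $cF^e(\phi(z))=0$ in $H^d_\fm(R)$. Surjectivity is where the hypothesis $Q\subseteq\tau_{par}(R)$ is crucial. By the defining property of parameter test elements, $\tau_{par}(R)\cdot 0^*_{H^d_\fm(R)}=0$, so $0^*_{H^d_\fm(R)}\subseteq (0:_{H^d_\fm(R)}Q)$. In the equidimensional excellent setting this annihilator equals $\phi(R/Q)$, so every $\eta\in 0^*_{H^d_\fm(R)}$ takes the form $\phi(z)$ at level one, and Theorem \ref{Smthm:0*_Frat} forces $z\in Q^*$. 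Hence $\ell(Q^*/\langle Q\rangle)=\ell(0^*_{H^d_\fm(R)})$.

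For the lower summand, first note that $\tau_{par}(R)$ being $\fm$-primary forces $R$ to be F-rational on the punctured spectrum, hence generalized Cohen-Macaulay, so $\ell(H^i_\fm(R))<\infty$ for $i<d$. Moreover, elements of $\tau_{par}(R)$ annihilate each $H^i_\fm(R)$ with $i<d$ up to a bounded power, so $Q$ is a standard parameter ideal in this generalized Cohen-Macaulay ring. Schenzel's length formula then gives $\ell(R/Q)=e_0(Q)+\sum_{i=0}^{d-1}\binom{d-1}{i}\ell(H^i_\fm(R))$, while a parallel analysis of the embedding $R/\langle Q\rangle\hookrightarrow H^d_\fm(R)$ yields $\ell(R/\langle Q\rangle)=e_0(Q)-\sum_{i=1}^{d-1}\binom{d-1}{i-1}\ell(H^i_\fm(R))$. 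Subtracting and applying the Pascal identity $\binom{d-1}{i}+\binom{d-1}{i-1}=\binom{d}{i}$ produces
$$\ell(\langle Q\rangle/Q)\;=\;\sum_{i=0}^{d-1}\binom{d}{i}\ell(H^i_\fm(R)),$$
which, added to $\ell(Q^*/\langle Q\rangle)=\ell(0^*_{H^d_\fm(R)})$ from the previous paragraph, completes the proof.

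The main obstacle is the surjectivity step: every other inclusion is formal, but the descent of an arbitrary $\eta\in 0^*_{H^d_\fm(R)}$ to level one as $\phi(z)$ with $z\in Q^*$ is precisely what the parameter test element hypothesis is arranged to furnish, via $\tau_{par}(R)\cdot 0^*_{H^d_\fm(R)}=0$. A secondary technical step is verifying that parameter ideals contained in $\tau_{par}(R)$ are standard in the generalized Cohen-Macaulay sense, so that the Schenzel-type length formulas apply and the Pascal rearrangement produces the correct binomial coefficient.
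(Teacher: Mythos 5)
The paper offers no proof of this statement---it is quoted directly from \cite{MQ2021}---so there is no internal argument to compare against; I will assess your proposal on its own terms. Your strategy of sandwiching the limit closure $\langle Q\rangle$ between $Q$ and $Q^*$, identifying $Q^*/\langle Q\rangle$ with $0^*_{H^d_\fm(R)}$ via the \v{C}ech map $\phi$, and computing $\ell(\langle Q\rangle/Q)$ from Schenzel-type length formulas for standard parameter ideals is the right one, and is essentially how the result is established in the cited source.

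However, the two load-bearing steps are asserted rather than proved, and one of them is justified incorrectly. The equality $(0:_{H^d_\fm(R)}Q)=\phi(R/Q)$ is \emph{not} a consequence of $R$ being excellent and equidimensional: for a general parameter ideal only the inclusion $\phi(R/Q)\subseteq(0:_{H^d_\fm(R)}Q)$ is formal, and the reverse inclusion is a genuine theorem that requires $Q$ to be a standard parameter ideal (which holds here precisely because the $x_i$ are parameter test elements, cf.\ Remark \ref{rem:main}(i)). As written, your justification would establish the surjectivity of $Q^*/\langle Q\rangle\to 0^*_{H^d_\fm(R)}$ for every parameter ideal in every excellent equidimensional ring, which is false; the hypothesis $Q\subseteq\tau_{par}(R)$ enters twice, not once. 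Similarly, the formula $\ell(R/\langle Q\rangle)=e_0(Q)-\sum_{i=1}^{d-1}\binom{d-1}{i-1}\ell(H^i_\fm(R))$---equivalently $\ell(\langle Q\rangle/Q)=\sum_{i=0}^{d-1}\binom{d}{i}\ell(H^i_\fm(R))$---is the second nontrivial ingredient, valid only for standard parameter ideals, and ``a parallel analysis'' is not a proof of it. The remaining steps (injectivity of $\phi$ on $Q^*/\langle Q\rangle$ and the translation back to $z\in Q^*$ via Theorem \ref{Smthm:0*_Frat}, the fact that $\tau_{par}(R)$ annihilates $0^*_{H^d_\fm(R)}$, and the generalized Cohen--Macaulayness of $R$) are correctly argued or correctly reduced to standard facts. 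In short: the skeleton is the right one, but the argument is complete only once the two standard-parameter-ideal lemmas are actually supplied rather than attributed to the wrong hypotheses.
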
 
\begin{remark}\label{rem:main}
(i) If $Q$ is an ideal generated by a system of parameters of $R$ consisting of parameter test elements then it is a standard system of parameters of $R$ \cite[Remark 5.11]{H1998} and \cite[Proposition 3.8]{Sch_1983}.\\
(ii) If $Q$ is generated by a standard system of parameters, then the Hilbert polynomial, infact Hilbert function of $Q$ can be found in \cite[Corollary 3.2]{Sch_1979}, \cite[Corollary 4.2]{T86}, \cite[Theorem 7]{GoMV}, etc. For $n\geq 0,$ 
$$\ell(R/Q^n)=\sum_{i=0}^d(-1)^ie_i(Q)\binom{n+d-1-i}{d-i}, \text{ where }$$\\
$$e_i(Q)=(-1)^i\sum_{j=0}^{d-i}\binom{d-i-1}{j-1} \ell(H^j_{\fm}(R) )\mbox{ for all } i=1,2,\ldots, d.$$\\
(iii) If $x_1,\ldots ,x_d\in \tau_{par}(R)$ and $Q=(x_1,\ldots ,x_d)$ is $\fm$-primary in $(R,\fm)$ then $Q\subseteq \tau_{par}(R)$ and taking radicals on both sides, we obtain $\fm\subseteq \rad(\tau_{par}(R))$ which implies that $\tau_{par}(R)$ is either $\fm$-primary or $R.$
\end{remark}
\begin{theorem}\label{tyt_coeff}
Let $(R,\fm)$ be an excellent reduced equidimensional local ring of prime characteristic $p$ and dimension $d\geq 2.$  Let $x_1,x_2,\ldots,x_d$ be parameter test elements and $Q=(x_1,x_2,\ldots,x_d)$ be $\fm$-primary. Then \\
{\rm (1)} $e_1^*(Q)=e_0(Q)-\ell(R/Q^*)+e_1(Q) \text{ and } e_j^*(Q)=e_j(Q)+e_{j-1}(Q) \text{ for all } 2\leq j \leq d,$\\
{\rm (2)} $e_1^*(Q)=\sum_{i=2}^{d-1} \binom{d-2}{i-2}\ell(H^i_{\frak m}(R)) + \ell(0^*_{H^d_{\frak m}(R)}),$\\
{\rm (3)} $e_i^*(Q)=(-1)^{i-1}\left[ \sum_{j=0}^{d-i}\binom{d-i-1}{j-2}\ell(H^j_{\fm}(R))+\ell(H^{d-i+1}_{\fm}(R))\right]  \text{ for } i=2, \ldots, d.$
\end{theorem}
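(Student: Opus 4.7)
My plan is to derive parts (2) and (3) from part (1) by direct substitution, so the principal task is to prove (1). The strategy is to establish the polynomial identity
\[
P^*_Q(m) = P_Q(m-1) + \ell(R/Q^*)\binom{m+d-2}{d-1} \qquad \text{for } m \gg 0,
\]
and then read off the tight Hilbert coefficients $e_j^*(Q)$ by matching coefficients of $\binom{m+d-1-j}{d-j}$.

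To set up this identity, I would use both parts of Lemma \ref{tight intersec}. Part (a) applied at $n = m-1$ yields $Q^{m-1} \cap (Q^m)^* = Q^{m-1}Q^*$, giving the short exact sequence
\[
0 \to Q^{m-1}/Q^{m-1}Q^* \to R/(Q^m)^* \to R/\bigl(Q^{m-1} + (Q^m)^*\bigr) \to 0,
\]
and part (b) computes the length of the leftmost module as $\ell(R/Q^*)\binom{m+d-2}{d-1}$. Hence for every $m \geq 1$,
\[
\ell(R/(Q^m)^*) = \ell\bigl(R/(Q^{m-1}+(Q^m)^*)\bigr) + \ell(R/Q^*)\binom{m+d-2}{d-1}.
\]
The crux --- and the step I expect to be the main obstacle --- is showing that $(Q^m)^* \subseteq Q^{m-1}$ for all $m \gg 0$, so that the first term on the right collapses to $\ell(R/Q^{m-1}) = P_Q(m-1)$. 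For $d = 2$ this is immediate from the Hochster--Huneke tight-closure Briancon--Skoda theorem $(Q^m)^* \subseteq Q^{m-d+1}$. For $d \geq 3$ one must sharpen the Briancon--Skoda exponent, and here I would exploit that $Q \subseteq \tau_{par}(R)$ with $\tau_{par}(R)$ being $\fm$-primary by Remark \ref{rem:main}(iii): some large power $Q^N$ lies in $\tau_{par}(R)$ and therefore annihilates the tight-closure defect $J^*/J$ of every parameter ideal $J$; iterating Lemma \ref{tight intersec}(a) together with the standard-sop structure of the $x_i$ from Remark \ref{rem:main}(i) should then allow one to descend the exponent one unit at a time.

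Granting the identity, the coefficient match is a routine binomial computation. Pascal's identity $\binom{m+d-2-j}{d-j} = \binom{m+d-1-j}{d-j} - \binom{m+d-2-j}{d-1-j}$ rewrites
\[
P_Q(m-1) = P_Q(m) + \sum_{i=1}^{d}(-1)^i e_{i-1}(Q)\binom{m+d-1-i}{d-i},
\]
so comparison with $P^*_Q(m) = \sum_{j=0}^{d}(-1)^j e_j^*(Q)\binom{m+d-1-j}{d-j}$ gives $e_0^*(Q) = e_0(Q)$, $e_1^*(Q) = e_0(Q)+e_1(Q)-\ell(R/Q^*)$, and $e_j^*(Q) = e_j(Q)+e_{j-1}(Q)$ for $j \geq 2$, which is precisely (1).

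For (2) and (3) I then substitute the standard closed forms into (1): the standard-sop length $\ell(R/Q) = e_0(Q) + \sum_{i=0}^{d-1}\binom{d-1}{i}\ell(H^i_\fm(R))$, the value of $\ell(Q^*/Q)$ supplied by Ma--Quy's Theorem \ref{thm:tytcland0*}, and the formula $e_j(Q) = (-1)^j\sum_{k=0}^{d-j}\binom{d-j-1}{k-1}\ell(H^k_\fm(R))$ from Remark \ref{rem:main}(ii). The resulting sums telescope via the Pascal identities $\binom{d-1}{i-1} - \binom{d-2}{i-1} = \binom{d-2}{i-2}$ (for (2)) and $\binom{d-j}{k-1} - \binom{d-j-1}{k-1} = \binom{d-j-1}{k-2}$ (for (3)), yielding the stated closed forms; the boundary contribution $\ell(H^{d-j+1}_\fm(R))$ in (3) arises because the index range of $e_{j-1}(Q)$ extends one higher than that of $e_j(Q)$.
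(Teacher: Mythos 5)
Your overall architecture coincides with the paper's: both arguments reduce everything to the single length identity $\ell(R/(Q^{n+1})^*)=\ell(R/Q^n)+\ell(R/Q^*)\binom{n+d-1}{d-1}$, match binomial coefficients to obtain (1), and then substitute the standard-s.o.p.\ formulas together with Ma--Quy's Theorem \ref{thm:tytcland0*} to obtain (2) and (3); those latter computations in your sketch are correct and essentially identical to the paper's. The problem is precisely the step you flag as the main obstacle: the containment $(Q^m)^*\subseteq Q^{m-1}$, which by Lemma \ref{tight intersec}(a) is equivalent to $(Q^m)^*=Q^{m-1}Q^*$. The paper does not derive this from Brian\c{c}on--Skoda; it quotes Aberbach's result \cite[Lemma 3.1]{Ab96}, which asserts exactly that $(Q^{n+1})^*=Q^nQ^*$ for all $n\ge 1$ when $Q$ is generated by a system of parameters consisting of parameter test elements. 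That is where the test-element hypothesis on the $x_i$ does its real work, and it is a genuine theorem, not a formality.

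Your proposed substitutes for this step do not work. For $d=2$ you invoke ``the Hochster--Huneke tight-closure Brian\c{c}on--Skoda theorem $(Q^m)^*\subseteq Q^{m-d+1}$,'' but that theorem points the other way: it states $\overline{Q^{m}}\subseteq (Q^{m-d+1})^*$ \cite{HH1990}, a containment into the \emph{tight closure} of a smaller power. Combined with $(Q^m)^*\subseteq\overline{Q^m}$ it yields only $(Q^m)^*\subseteq (Q^{m-1})^*$, which is trivially true and gives nothing; it does not place $(Q^m)^*$ inside the ideal $Q^{m-1}$ itself. For $d\ge 3$ your sketch needs $\tau_{par}(R)$ (or a power of $Q$) to control $(Q^m)^*$ modulo ordinary powers of $Q$, but $Q^m$ is not a parameter ideal for $m\ge 2$ and the parameter test ideal is only guaranteed to multiply $J^*$ into $J$ for \emph{parameter} ideals $J$; the ``descend the exponent one unit at a time'' step is an aspiration rather than an argument. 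To close the gap you should either cite \cite[Lemma 3.1]{Ab96} as the paper does, or reproduce its proof, which uses the fact that the generators are parameter test elements in an essential and non-formal way.
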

\begin{proof}
(1) By Lemma \ref{tight intersec}, $Q^n/Q^nQ^*$ is a free $R/Q^*$-module of rank $\binom{n+d-1}{d-1}$ for all $n\geq 1$ and  by \cite[Lemma 3.1]{Ab96}, $(Q^{n+1})^*=Q^nQ^*$ for all $n\geq 1.$ Hence 
$$\ell (Q^n/Q^nQ^*)= \ell (Q^n/(Q^{n+1})^*)= \ell (R/Q^*) \binom{n+d-1}{d-1} .$$
Thus $\ell(R/(Q^{n+1})^*)=\ell(R/Q^n)+\ell(R/Q^*)\binom{n+d-1}{d-1}$ for all $n\geq 1.$ By Remark \ref{rem:main}(ii) the tight Hilbert function of $Q$ is given by 
\begin{align*}
H_Q^*(n) & =e_0(Q) \binom{n+d-2}{d}-e_1(Q)\binom{n+d-3}{d-1}+\cdots+(-1)^de_d(Q)+\ell(R/Q^*) \binom{n+d-2}{d-1}\\
&=\sum_{i=0}^de_i(Q)(-1)^i\binom{n+d-2-i}{d-i}+\ell(R/Q^*) \binom{n+d-2}{d-1}\\
&=\sum_{i=0}^de_i(Q)(-1)^i\left[ \binom{n+d-1-i}{d-i}- \binom{n+d-2-i}{d-1-i} \right]+\ell(R/Q^*) \binom{n+d-2}{d-1}\\
&=e_0(Q)\binom{n+d-1}{d}-[e_0(Q)-\ell(R/Q^*)+e_1(Q)]\binom{n+d-2}{d-1}\\
&+\sum_{i=2}^d(-1)^i[e_i(Q)+e_{i-1}(Q)]\binom{n+d-i-1}{d-i}.
\end{align*}
Equating like terms on both sides, we obtain the desired formulas.\\
(2) From (1) we have $e_1^*(Q)=e_0(Q)-\ell(R/Q^*)+e_1(Q).$ On the other hand, since $Q$ is standard, using Remark \ref{rem:main}(iii) and Theorem \ref{thm:tytcland0*} we have
\begin{eqnarray*}
\ell(R/Q^*) &=& \ell(R/Q) - \sum_{i=0}^{d-1} \binom{d}{i}\ell(H^i_{\frak m}(R)) - \ell(0^*_{H^d_{\frak m}(R)})\\
&=& e_0(Q) + \sum_{i=0}^{d-1} \binom{d-1}{i}\ell(H^i_{\frak m}(R)) - \sum_{i=0}^{d-1} \binom{d}{i}\ell(H^i_{\frak m}(R)) - \ell(0^*_{H^d_{\frak m}(R)})\\
&=& e_0(Q) - \sum_{i=1}^{d-1} \binom{d-1}{i-1}\ell(H^i_{\frak m}(R)) - \ell(0^*_{H^d_{\frak m}(R)}),
\end{eqnarray*}
where the second equality above follows from Remark \ref{rem:main}(i). Hence 
\begin{equation}
e_1^*(Q) = \sum_{i=1}^{d-1} \binom{d-1}{i-1}\ell(H^i_{\frak m}(R)) + \ell(0^*_{H^d_{\frak m}(R)}) + e_1(Q).
\end{equation}
Furthermore by Remark \ref{rem:main}(ii), it follows that 
\begin{align*}
e_1^*(Q) &= \sum_{i=1}^{d-1} \binom{d-1}{i-1}\ell(H^i_{\frak m}(R)) + \ell(0^*_{H^d_{\frak m}(R)}) -\sum_{j=0}^{d-1}\binom{d-2}{j-1}\ell(H^j_{\frak m}(R))\\
&=\sum_{i=1}^{d-1}\left[ \binom{d-1}{i-1}-\binom{d-2}{i-1}\right]\ell(H^i_{\frak m}(R)) + \ell(0^*_{H^d_{\frak m}(R)})\\
&=\sum_{i=1}^{d-1} \binom{d-2}{i-2}\ell(H^i_{\frak m}(R)) + \ell(0^*_{H^d_{\frak m}(R)})\\
&=\sum_{i=2}^{d-1} \binom{d-2}{i-2}\ell(H^i_{\frak m}(R)) + \ell(0^*_{H^d_{\frak m}(R)}).
\end{align*}
(3) Using Remark \ref{rem:main}(i)-(ii), we obtain
\begin{align*}
\ell(R/Q^n)&=\sum_{i=0}^d\binom{n+d-1-i}{d-i}(-1)^i e_i(Q) \mbox{ for all } n\geq 1 ,\\
(-1)^ie_i(Q)&=\sum_{j=0}^{d-i}\binom{d-i-1}{j-1} \ell(H^j_{\fm}(R) )\mbox{ for all } i=1,2,\ldots, d,\\
\ell(R/Q)-e_0(Q)&=\sum_{j=0}^{d-1}\binom{d-1}{j}\ell(H_{\fm}^j(R))\\
e_d(Q)&=(-1)^d\ell(H_{\fm}^0(R)).
\end{align*}
In the formulas above, we follow the convention $\binom{n}{-1}=1$ if $n=-1$ and $\binom{n}{-1}=0$ if $n\neq -1.$
 By the above formulas and the fact that $R$ is reduced and equidimensional, 
 $$e_d^*(Q)=e_d(Q)+e_{d-1}(Q)=(-1)^{d-1}\ell(H^1_{\fm}(R)).$$  
 Next we find the formulas for $e_i^*(Q)$ where $i=2,3, \ldots, d$ in terms of the lengths of the local cohomology modules. Put
 $h^j=\ell(H_{\fm}^j(R)).$
 \begin{eqnarray*}
 e_i^*(Q)&=& e_i(Q)+e_{i-1}(Q)\\
 &=& (-1)^i\sum_{j=0}^{d-i}\binom{d-i-1}{j-1}h^j+(-1)^{i-1}\left[
 \sum_{j=0}^{d-i}\binom{d-i}{j-1}h^j+h^{d-i+1}\right]\\
 &=&(-1)^{i-1}\left[ \sum_{j=0}^{d-i}\binom{d-i-1}{j-2}h^j+h^{d-i+1}\right].
 \end{eqnarray*}
\end{proof}
In the $\dim 1$ case Question \ref{hun_que} has an affirmative answer. Let $(R,\fm)$ be a $1$-dimensional analytically unramified local ring and $I=(a)$ be $\fm$-primary. Since $R$ is reduced and $\dim R=1,$ $R$ is Cohen-Macaulay. Let $$P_{I}^*(n)=e(I)n-e_1^*(I).$$ If $e_1^*(I)=0$ then $R$ is F-rational.  Let $(b)\subseteq \fm$ be a minimal reduction of $\fm.$ By Brian\c{c}on-Skoda Theorem, $\overline{(b)}=(b)^*.$ As $R$ is F-rational, $(b)^*=(b).$ Thus $(b)=\overline{(b)}=\fm.$ Hence $R$ is a regular local ring. In the case $\dim R\geq 2,$ we have answered Huneke's question with some additional hypothesis which can be derived as a consequence of Theorem \ref{tyt_coeff}.
\begin{corollary}\label{cor:main}
Let $(R,\fm)$ be an excellent reduced equidimensional local ring of prime characteristic $p$ and dimension $d\geq 2.$  Let $x_1,x_2,\ldots,x_d$ be parameter test elements and $Q=(x_1,x_2,\ldots,x_d)$ be $\fm$-primary. Then the following are equivalent.\\
{\rm (i)} $R$   is F-rational. \\
{\rm (ii)}  $e_1^*(Q)=e_1(Q).$\\
{\rm (iii)} $e_1^*(Q)=0$ and   $ \depth R\geq 2.$
\end{corollary}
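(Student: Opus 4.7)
The plan is to read the three equivalences directly off the formulas established in Theorem \ref{tyt_coeff}. The key observation is that
$$e_1^*(Q) = \sum_{i=2}^{d-1}\binom{d-2}{i-2}\ell(H^i_{\fm}(R)) + \ell(0^*_{H^d_{\fm}(R)})$$
is a non-negative sum, and by Remark \ref{rem:main}(ii) so is
$$-e_1(Q) = \sum_{j=1}^{d-1}\binom{d-2}{j-1}\ell(H^j_{\fm}(R)),$$
because the $j=0$ term drops out under the convention $\binom{d-2}{-1}=0$ for $d\geq 2$. Every condition in (i)--(iii) thus reduces to the vanishing of individual summands, together with the observation (from Remark \ref{rem:tytcl=0iffRisFrat} and the standard fact that F-rational excellent rings are Cohen--Macaulay) that F-rationality is equivalent to $H^i_{\fm}(R)=0$ for $0\leq i\leq d-1$ together with $0^*_{H^d_{\fm}(R)}=0$.

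For (i)$\Rightarrow$(ii) and (i)$\Rightarrow$(iii), F-rationality makes $R$ Cohen--Macaulay of dimension $d\geq 2$, killing every $H^i_{\fm}(R)$ for $i<d$; then $e_1(Q)=0$ by Remark \ref{rem:main}(ii), while $e_1^*(Q)$ collapses to $\ell(0^*_{H^d_{\fm}(R)})=0$. Hence $e_1^*(Q)=e_1(Q)=0$ and $\depth R=d\geq 2$.

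For (ii)$\Rightarrow$(i), I would add the two formulas above and apply Pascal's identity $\binom{d-2}{i-2}+\binom{d-2}{i-1}=\binom{d-1}{i-1}$ to obtain
$$e_1^*(Q)-e_1(Q) = \sum_{i=1}^{d-1}\binom{d-1}{i-1}\ell(H^i_{\fm}(R)) + \ell(0^*_{H^d_{\fm}(R)}).$$
Since each coefficient $\binom{d-1}{i-1}$ is strictly positive for $1\leq i\leq d-1$, the assumed equality forces $H^i_{\fm}(R)=0$ for $1\leq i\leq d-1$ and $0^*_{H^d_{\fm}(R)}=0$. Because $R$ is reduced and equidimensional of positive dimension, every associated prime is minimal of coheight $d\geq 1$, so $\fm\notin\Ass R$ and $H^0_{\fm}(R)=0$ as well. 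Thus $R$ is Cohen--Macaulay with $0^*_{H^d_{\fm}(R)}=0$, and F-rationality follows from Remark \ref{rem:tytcl=0iffRisFrat}.

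For (iii)$\Rightarrow$(i), the depth hypothesis gives $H^0_{\fm}(R)=H^1_{\fm}(R)=0$, and the formula for $e_1^*(Q)$ is a sum of non-negative terms whose vanishing forces $H^i_{\fm}(R)=0$ for $2\leq i\leq d-1$ and $0^*_{H^d_{\fm}(R)}=0$, again delivering F-rationality by the same remark. No step presents a genuine obstacle; the entire argument is careful bookkeeping on top of Theorem \ref{tyt_coeff} and the Cohen--Macaulay property of excellent F-rational rings.
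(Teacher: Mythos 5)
Your proof is correct, and the overall strategy (read everything off Theorem \ref{tyt_coeff}) matches the paper's. The one place you genuinely diverge is (ii)$\Rightarrow$(i): the paper uses Theorem \ref{tyt_coeff}(1) to convert $e_1^*(Q)=e_1(Q)$ into $e_0(Q)=\ell(R/Q^*)$ and then invokes the Goto--Nakamura theorem to conclude F-rationality, whereas you instead use the identity $e_1^*(Q)-e_1(Q)=\sum_{i=1}^{d-1}\binom{d-1}{i-1}\ell(H^i_{\fm}(R))+\ell(0^*_{H^d_{\fm}(R)})$ (which is exactly the intermediate displayed equation in the paper's proof of Theorem \ref{tyt_coeff}(2)), force all summands to vanish, note $H^0_{\fm}(R)=0$ since $R$ is reduced and equidimensional of positive dimension, and finish with Smith's criterion via Remark \ref{rem:tytcl=0iffRisFrat}. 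Your route is more uniform --- it handles (ii)$\Rightarrow$(i) and (iii)$\Rightarrow$(i) by the same non-negativity bookkeeping and avoids citing Goto--Nakamura --- while the paper's route is shorter at that step because it outsources the work to an external theorem. Both are valid; your sign analysis of $-e_1(Q)$ under the convention $\binom{d-2}{-1}=0$ for $d\geq 2$ is accurate, and the forward implications (i)$\Rightarrow$(ii),(iii) via Cohen--Macaulayness of excellent F-rational rings agree with the paper.
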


\begin{proof} (i) $\iff$ (ii): If $R$ is F-rational then $R$ is Cohen-Macaulay.  Therefore  $Q^n=(Q^{n})^*$ for all  $n\geq 1$ \cite[Corollary 4.3]{GMV2019}. Hence {$\ell(R/(Q^{n+1})^*)=e_0(Q)\binom{n+d}{d}$ for all $n\geq 0$} which implies that $e_1^*(Q)=e_1(Q)=0.$ 

Conversely, let $e_1^*(Q)=e_1(Q).$ Using Theorem \ref{tyt_coeff}(1),  $e_0(Q)=\ell(R/Q^*).$ As $R$ is unmixed, by \cite{Got_Nak}, $R$ is F-rational.\\
(i) $\iff$ (iii): If $R$ is F-rational then it is Cohen-Macaulay so that (iii) holds. Conversely, let $e_1^*(Q)=0$ and $\depth R\geq 2.$ By Theorem \ref{tyt_coeff}(2), it follows that $0^*_{H^d_{\frak m}(R)}=0$ and $H^i_{\frak m}(R)=0$ for $2\leq i\leq d-1.$ As $\depth R\geq 2,$ $H^0_{\frak m}(R)=H^1_{\frak m}(R)=0.$ Hence $R$ is Cohen-Macaulay ring with $0^*_{H^d_{\frak m}(R)}=0.$ By Remark \ref{rem:tytcl=0iffRisFrat}, it follows that $R$ is F-rational.
\end{proof}

\section{A Counterexample to Huneke's question}\label{sec:hun_coun}
\normalfont{ We provide a negative answer to Huneke's question by constructing examples of unmixed local rings in which $e_1^*(Q)=0$ for an ideal $Q$ generated by a system of parameters  but $R$ is not F-rational.  
} The next proposition gives a class of examples where $0^*_{H^d_{\fm}(R)}$ vanishes.
\begin{proposition}\label{prop:exampl}
Let $(R, \frak m)$ be an equidimensional reduced local ring of dimension $d$, and $\mathrm{Ass}R = \{P_1, P_2\}$. Suppose $R/P_1$ and $R/P_2$ are both F-rational and $\dim R/(P_1 + P_2) \le d-2$. Then $0^*_{H^d_{\frak m}(R)}=0.$
\end{proposition}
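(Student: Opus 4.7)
The plan is to reduce the vanishing of $0^*_{H^d_{\fm}(R)}$ to the corresponding vanishings for the F-rational domains $R/P_1$ and $R/P_2$ by means of the Mayer--Vietoris sequence attached to the pair of minimal primes. Since $R$ is reduced with $\mathrm{Ass}\,R = \{P_1, P_2\}$, we have $P_1 \cap P_2 = 0$, and the diagonal homomorphism $R \hookrightarrow R/P_1 \oplus R/P_2$ sits in a short exact sequence
$$0 \to R \to R/P_1 \oplus R/P_2 \to R/(P_1+P_2) \to 0.$$
Applying local cohomology and using the hypothesis $\dim R/(P_1+P_2) \le d-2$, which forces $H^{d-1}_{\fm}(R/(P_1+P_2)) = H^d_{\fm}(R/(P_1+P_2)) = 0$, I would extract the $R$-linear isomorphism
$$H^d_{\fm}(R) \;\cong\; H^d_{\fm}(R/P_1) \oplus H^d_{\fm}(R/P_2),$$
where the two summands on the right are induced by the quotient maps $R \to R/P_i$.

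Next I would transfer tight closure across this isomorphism. Because Frobenius is a natural transformation and the decomposition is induced by ring homomorphisms, the Frobenius action on $H^d_{\fm}(R)$ agrees with the componentwise Frobenius on the right-hand side. Take $\eta \in 0^*_{H^d_{\fm}(R)}$ and write $\eta = (\eta_1, \eta_2)$. By definition there exists $c \in R^{\circ}$ with $cF^e(\eta) = 0$ for all $e \gg 0$. Since $R$ is reduced with minimal primes $P_1, P_2$, we have $R^{\circ} = R \setminus (P_1 \cup P_2)$, so the image $\overline{c}$ of $c$ in the domain $R/P_i$ is nonzero, i.e.\ it lies in $(R/P_i)^{\circ}$. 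Reading off the $i$-th component gives $\overline{c}\, F^e(\eta_i) = 0$ in $H^d_{\fm}(R/P_i)$ for all $e \gg 0$, so $\eta_i \in 0^*_{H^d_{\fm}(R/P_i)}$ with respect to the $R/P_i$-module structure.

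Finally, each $R/P_i$ is an excellent F-rational domain, hence Cohen--Macaulay, so by Remark \ref{rem:tytcl=0iffRisFrat} we have $0^*_{H^d_{\fm}(R/P_i)} = 0$. Therefore $\eta_1 = \eta_2 = 0$, and $\eta = 0$, proving the proposition. The main obstacle is verifying that the Frobenius actions are compatible with the Mayer--Vietoris isomorphism and that the tight-closure test element $c \in R^{\circ}$ correctly descends, componentwise, to a test element $\overline{c} \in (R/P_i)^{\circ}$; once these compatibilities are in place, the rest of the argument is bookkeeping that leverages the F-rationality assumption in each quotient.
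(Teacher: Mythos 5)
Your proposal is correct and follows essentially the same route as the paper: the short exact sequence $0 \to R \to R/P_1 \oplus R/P_2 \to R/(P_1+P_2) \to 0$, the vanishing of $H^{d-1}_{\fm}$ and $H^{d}_{\fm}$ of $R/(P_1+P_2)$ from the dimension bound, the resulting decomposition $H^d_{\fm}(R) \cong H^d_{\fm}(R/P_1) \oplus H^d_{\fm}(R/P_2)$, and the F-rationality of each $R/P_i$. The only difference is that you spell out the step the paper labels ``clearly''---the Frobenius compatibility and the descent of the multiplier $c \in R^{\circ}$ to $(R/P_i)^{\circ}$---which is a welcome but not substantively different elaboration.
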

\begin{proof}
Consider the long exact sequence of local cohomology arising from the following short exact sequence.
$$0 \to R \to R/P_1\oplus R/P_2 \to R/(P_1+P_2)\to 0.$$
Since $\dim R/(P_1 + P_2) \le d-2,$ it follows that $H^{i}_{\frak m}(R/(P_1 + P_2))=0$ for $i=d-1,d.$ This implies that $H_{\frak m}^d(R) \cong H_{\frak m}^d(R/P_1) \oplus H_{\frak m}^d(R/P_2).$ Clearly, $0^*_{H_{\frak m}^d(R)}\cong 0^*_{H_{\frak m}^d(R/P_1)}\oplus 0^*_{H_{\frak m}^d(R/P_2)}.$ Since $R/P_i$ is F-rational for $i=1,2$ we have $0^*_{H_{\frak m}^d(R/P_i)}=0$ which implies that $0^*_{H_{\frak m}^d(R)}=0.$
\end{proof}
\begin{lemma}\label{lemma:rel_of_multi}
Let $(R, \frak m)$ be an equidimensional reduced local ring of dimension $d$, and $\mathrm{Ass}\;R = \{P_1, P_2\}$. Then for any $\fm$-primary parameter ideal $Q$ in $R,$ $$e_0(Q)=e_0\big((Q+P_1)/P_1\big)+e_0\big((Q+P_2)/P_2\big).$$
\end{lemma}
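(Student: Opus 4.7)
The plan is to recognize this identity as a direct instance of the associativity formula for Hilbert--Samuel multiplicity. First I will observe that since $R$ is reduced the nilradical vanishes, so $\Ass R = \Min R = \{P_1,P_2\}$, and equidimensionality forces $\dim R/P_i = d$ for each $i$. Moreover, localizing the reduced ring $R$ at a minimal prime gives a reduced zero-dimensional local ring, i.e.\ a field, so $\ell_{R_{P_i}}(R_{P_i}) = 1$. The classical associativity formula for an $\fm$-primary parameter ideal $Q$ then specializes to
$$e_0(Q;R) \;=\; \sum_{P\in \Min R,\ \dim R/P=d} \ell_{R_P}(R_P)\, e_0(Q; R/P) \;=\; e_0(Q;R/P_1) + e_0(Q;R/P_2),$$
and since the extension of $Q$ to $R/P_i$ is $(Q+P_i)/P_i$ we obtain the stated equality.

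An alternative, more self-contained route reuses the short exact sequence already exploited in the proof of Proposition~\ref{prop:exampl}. From $P_1\cap P_2 = 0$ (again by reducedness) the natural map yields
$$0 \to R \to R/P_1 \oplus R/P_2 \to R/(P_1+P_2) \to 0.$$
The additional ingredient needed here is $\dim R/(P_1+P_2)\le d-1$: since $P_1\ne P_2$ are distinct minimal primes of coheight $d$, the ideal $P_1+P_2$ is contained in no prime of coheight $d$. Hence $e_0(Q;R/(P_1+P_2))=0$, and additivity of Hilbert--Samuel multiplicity on the sequence above yields $e_0(Q;R)=e_0(Q;R/P_1)+e_0(Q;R/P_2)$, matching the first approach.

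There is essentially no real obstacle: the two points that require the hypotheses are the length computation $\ell_{R_{P_i}}(R_{P_i})=1$ (which uses reducedness) and the dimension drop $\dim R/(P_1+P_2)<d$ (which uses distinctness of the two full-dimensional minimal primes). Both are immediate, and the rest is a direct appeal to standard multiplicity machinery.
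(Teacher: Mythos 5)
Your first argument is exactly the paper's proof: the associativity formula combined with the observations that reducedness gives $\ell_{R_{P_i}}(R_{P_i})=1$ and that equidimensionality puts both minimal primes into the sum. The alternative route via the short exact sequence is also valid, but the primary approach coincides with the paper's, so there is nothing to add.
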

\begin{proof}
Since $R$ is reduced, $\ell_{R_{P_i}}(R_{P_i})=1$ for $i=1,2.$
By the associativity formula for multiplicity we get, 
\begin{align*}
e_0(Q)&=e_0\big((Q+P_1)/P_1\big)\ell(R_{P_1})+e_0(Q+P_2/P_2)\ell(R_{P_2})\\
&=e_0\big((Q+P_1)/P_1\big)+e_0\big((Q+P_2)/P_2\big).
\end{align*}
\end{proof}
\begin{proposition}\label{prop:genofexampl}
Let $(R, \frak m)$ be an equidimensional reduced local ring of dimension $d$ and prime characteristic $p$ with $\mathrm{Ass}\;R = \{P_1, P_2\}$. Suppose $R/P_1$ and $R/P_2$ are both F-rational and $\dim R/(P_1 + P_2) \le d-2$. Then $R$ is not Cohen-Macaulay and for any ideal generated by a system of parameters $Q,$ we have $e_1^*(Q) = 0$. 
\end{proposition}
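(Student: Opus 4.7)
The plan is to prove the proposition in two parts: first show $R$ is not Cohen-Macaulay via the long exact sequence of local cohomology; then show $e_1^*(Q) = 0$ by explicitly identifying $(Q^n)^*$ for every $n \geq 1$. Since $R$ is reduced with $\mathrm{Ass}\,R = \{P_1, P_2\}$, we have $P_1 \cap P_2 = 0$, yielding the short exact sequence
$$0 \to R \to R/P_1 \oplus R/P_2 \to A \to 0,$$
where $A := R/(P_1+P_2)$ has dimension $s \leq d-2$. F-rationality forces each $R/P_i$ to be Cohen-Macaulay of dimension $d$, so $H^j_\fm(R/P_i) = 0$ for $j < d$. The long exact sequence then collapses to isomorphisms $H^{j+1}_\fm(R) \cong H^j_\fm(A)$ for $0 \leq j \leq d-2$; since $H^s_\fm(A) \neq 0$ and $s+1 \leq d-1 < d$, this produces nonvanishing local cohomology of $R$ below the top, so $R$ is not Cohen-Macaulay.

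For the vanishing of $e_1^*(Q)$, the crux is the identity
$$(Q^n)^* = (Q^n + P_1) \cap (Q^n + P_2) \qquad \text{for all } n \geq 1.$$
For $\subseteq$: a witness $c \in R^\circ = R \setminus (P_1 \cup P_2)$ for $x \in (Q^n)^*$ reduces modulo $P_i$ to a witness for $\bar x \in ((Q+P_i)/P_i)^{n*}$ in $R/P_i$, which by F-rationality, Cohen-Macaulayness, and Lemma \ref{lemma:TFAEtytcl} equals $(Q^n + P_i)/P_i$; hence $x \in Q^n + P_i$. For $\supseteq$, given $x$ in the intersection, the Frobenius identity in characteristic $p$ yields $x^{p^e} \in (Q^n)^{[p^e]} + P_i$ for each $e \geq 0$ and $i \in \{1,2\}$. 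I would then choose $p_i \in P_i \setminus P_{3-i}$ (possible since $P_1,P_2$ are distinct minimal primes) and set $c := p_1 + p_2 \in R^\circ$. For any ideal $I \subseteq R$ and any $y \in (I+P_1) \cap (I+P_2)$, a short computation using $P_1 P_2 \subseteq P_1 \cap P_2 = 0$ shows $cy \in I$; specializing to $I = (Q^n)^{[p^e]}$ and $y = x^{p^e}$ produces $c\,x^{p^e} \in (Q^n)^{[p^e]}$ uniformly in $e$, so $x \in (Q^n)^*$.

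The length calculation then proceeds from the exact sequence
$$0 \to R/(Q^n)^* \to R/(Q^n+P_1) \oplus R/(Q^n+P_2) \to A/Q^n A \to 0.$$
Since $R/P_i$ is Cohen-Macaulay and $(Q+P_i)/P_i$ is a parameter ideal in $R/P_i$, we have $\ell(R/(Q^n+P_i)) = e_0((Q+P_i)/P_i)\binom{n+d-1}{d}$ for all $n \geq 1$; summing via Lemma \ref{lemma:rel_of_multi} yields $e_0(Q)\binom{n+d-1}{d}$. Meanwhile $\ell(A/Q^n A)$ is, for large $n$, a polynomial in $n$ of degree $s \leq d-2$. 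Therefore
$$\ell(R/(Q^n)^*) = e_0(Q)\binom{n+d-1}{d} + \bigl(\text{polynomial in } n \text{ of degree} \leq d-2\bigr),$$
so the coefficient of $\binom{n+d-2}{d-1}$ vanishes and $e_1^*(Q) = 0$.

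The main obstacle I expect is the reverse inclusion $(Q^n+P_1) \cap (Q^n+P_2) \subseteq (Q^n)^*$: the Frobenius identity alone only places $x^{p^e}$ in $(Q^n)^{[p^e]} + P_i$, and one must produce a single multiplier $c \in R^\circ$, uniform in $e$, carrying the intersection modulo $(Q^n)^{[p^e]}$ to zero. The product-is-zero relation $P_1 P_2 = 0$ afforded by reducedness is precisely what makes the choice $c = p_1 + p_2$ succeed, and this is the structural feature of the hypothesis $\mathrm{Ass}\,R = \{P_1, P_2\}$ that drives the whole computation.
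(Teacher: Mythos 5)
Your proposal is correct and follows essentially the same route as the paper: both hinge on the identity $(Q^n)^* = (Q^n+P_1)\cap(Q^n+P_2)$, the resulting Mayer--Vietoris-type exact sequence, and the observation that $\ell(R/(Q^n+P_1+P_2))$ has degree at most $d-2$, while the non-Cohen-Macaulayness comes from the same short exact sequence (you via the local cohomology long exact sequence, the paper via the depth lemma). The only substantive difference is that you prove the reverse inclusion by hand with the multiplier $c=p_1+p_2$ and $P_1P_2=0$, where the paper simply cites Hochster--Huneke's result that tight closure can be checked modulo the minimal primes; your verification of that step is valid.
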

\begin{proof}
Since $R/P_i$ is F-rational we have $(Q^{n+1} R/P_i)^* = (Q^{n+1} + P_i)/P_i$ for $i=1,2.$ Using \cite[Proposition 6.25(a)]{HH1990}, we have $(Q^{n+1})^* + P_i = Q^{n+1} + P_i$ for all $i = 1, 2$. Thus $(Q^{n+1})^* \subseteq (Q^{n+1} + P_1) \cap (Q^{n+1} + P_2)$. Moreover, $x \in (Q^{n+1})^*$ if and only if the image of $x$ in $R/P_i$ is contained in $(Q^{n+1} R/P_i)^* = (Q^{n+1} + P_i)/P_i$ for $i = 1, 2$. Hence $(Q^{n+1})^* = (Q^{n+1} + P_1) \cap (Q^{n+1} + P_2)$. Therefore we have the short exact sequence
$$0 \to R/(Q^{n+1})^* \to R/(Q^{n+1} + P_1) \oplus R/(Q^{n+1} + P_2) \to R/(Q^{n+1} + P_1 + P_2) \to 0$$
for all $n \ge 0$. Thus we have 
\begin{align*}
\ell(R/(Q^{n+1})^*) &=\ell\big(R/(Q^{n+1} + P_1)\big)+\ell\big( R/(Q^{n+1} + P_2)\big)- \ell\big(R/(Q^{n+1} + P_1 + P_2)\big)\\
&=\left[e_0\big((Q+P_1)/P_1\big)+e_0\big((Q+P_2)/P_2\big)\right]\binom{n+d}{d}- \ell\big(R/(Q^{n+1} + P_1 + P_2)\big)\\
&= e_0(Q) \binom{n+d}{d} - \ell(R/(Q^{n+1} + P_1 + P_2)),
\end{align*}
where the last equality follows from Lemma \ref{lemma:rel_of_multi}.

Since $\ell(R/(Q^{n+1} + P_1 + P_2))$ is a polynomial of degree atmost $ d-2,$ $e_1^*(Q) = 0$ for all $Q$. 
Consider the the short exact sequence of $R$-modules
$$0 \to R \to R/P_1 \oplus R/P_2 \to R/(P_1 + P_2) \to 0.$$ Since $\depth (R/P_1 \oplus R/P_2)=d>\dim (R/(P_1 + P_2)),$ by the depth Lemma $\depth R\leq d-1.$ Hence $R$ is not Cohen-Macaulay.
\end{proof}
We construct an example to  show that the condition $\depth R \geq 2$ in Corollary \ref{cor:main} is not superfluous for characterization of F-rationality in terms of vanishing of $e_1^*(Q).$
\begin{example}
Let $S=\mathbb{F}_p[|X,Y,Z,W|]$ and $R=\frac{S}{I\cap J},$ where $I=(X,Y)$ and $J=(Z,W).$ Let the lower case letters denote images of the upper case letters. Put $\frak m=(x,y,z,w).$ Let $a=x+z,\, b=y+w.$ Then $a,b$ is a system of parameters. Set $Q=(a,b).$ Since $R$ is Buchsbaum $$\ell \left(\frac{R}{Q} \right)-e_0(Q)=\sum_{i=0}^{d-1}\binom{d-1}{i}\ell(H_{\frak m}^i(R))=\ell(H_{\frak m}^1(R))=1,$$ Note that $H_{\frak m}^1(R)\cong H_{\frak m}^0(R/\frak m)\cong  R/\frak m.$
Using $e_i(Q)=(-1)^i\sum_{j=0}^{d-i}\binom{d-i-1}{j-1}\ell(H_{\frak m}^j(R)),$ we get $e_1(Q)=-\ell(H_{\frak m}^1(R))=-1,$ $e_2(Q)=0.$ Since $R$ is Buchsbaum and $0^*_{H^d_{\frak{m}}}(R)=0,$ it follows that $\tau_{par}(R)=\frak{m}.$ Thus by Theorem \ref{tyt_coeff}(1), $e_2^*(Q)=e_2(Q)+e_1(Q)=-1$ and $e_1^*(Q)=0$ by Proposition \ref{prop:genofexampl}. Therefore $$P_{Q}^*(n)=2\binom{n+1}{2}-1.$$
\end{example}

\begin{example} We construct a complete local domain of dimension $2$ that is not F-rational but there exists an ideal $Q$ generated by a system of parameters $Q$ for which $e_1^*(Q)=0.$ Let $k$ be a field of prime characteristic $p\geq 3$ and $R =k[[x^4, x^3y, xy^3, y^4]]$. We have the $S_2$-ification of $R$ is the local ring $S = k[[x^4, x^3y,x^2y^2, xy^3, y^4]]$. We have $C:=S/R \cong k$, so that $\ell(C/JC)=1$ for any $\fm$-primary ideal $J$ of $R.$ Let $Q$ be any $\fm$-primary ideal parameter ideal of $R$.
Consider the short exact sequence,
$$0 \to R/(Q^{n+1})^*\to S/(Q^{n+1}S)^* \to C \to 0.$$ We have $$\ell(R/(Q^{n+1})^*)=\ell(S/(Q^{n+1})^*S)-1.$$ Since
$S$ is F-regular,
$$\ell (R/(Q^{n+1})^*) = e_0(Q) \binom{n+2}{2} - 1$$ 
for all $n\ge 1$. Since $S/\fn \cong R/\fm, $ $e_0(Q)=e_0(QS).$  Hence $e_1^*(Q) = 0$.
\end{example}

\end{document}